\newtheorem{thm}{Theorem}
\newtheorem{lem}{Lemma}
\theoremstyle{definition}
\def\-{\mbox{--}}
\newtheorem{pro}{Proposition}
\newtheorem{obser}{Observation}
\begin{document}
\title{\Large\bf On extremal graphs with exactly one\\ Steiner tree connecting any
$k$ vertices\footnote{Supported by NSFC No.11071130.}}
\author{\small  Xueliang~Li, Yan~Zhao\\
\small Center for Combinatorics and LPMC-TJKLC\\
\small Nankai University, Tianjin 300071, China\\
\small lxl@nankai.edu.cn; zhaoyan2010@mail.nankai.edu.cn}
\date{}
\maketitle
\begin{abstract}
The problem of determining the largest number
$f(n;\overline{\kappa}\leq \ell)$ of edges for graphs with $n$
vertices and maximal local connectivity at most $\ell$ was
considered by Bollob\'{a}s. Li et al. studied the largest number
$f(n;\overline{\kappa}_3\leq2)$ of edges for graphs with $n$
vertices and at most two internally disjoint Steiner trees
connecting any three vertices. In this paper, we further study the
largest number $f(n;\overline{\kappa}_k=1)$ of edges for graphs with
$n$ vertices and exactly one Steiner tree connecting any $k$
vertices with $k\geq 3$. It turns out that this is not an easy task
to finish, not like the same problem for the classical connectivity
parameter. We determine the exact values of
$f(n;\overline{\kappa}_k=1)$ for $k=3,4,n$, respectively, and
characterize the graphs which attain each of these values.

{\flushleft\bf Keywords}: maximal generalized local connectivity,
internally disjoint Steiner trees.

{\flushleft\bf AMS subject classification 2010}: 05C05, 05C40, 05C75.

\end{abstract}

\section{Introduction}
All graphs considered in this paper are simple, finite and
undirected. We follow the terminology and notation of Bondy and
Murty \cite{Bondy}. We refer to the number of vertices in a graph as
the $order$ of the graph and the number of its edges as its $size$.
We say that two paths are \emph{internally disjoint} if they have no
common vertex except the end vertices. For any two distinct vertices
$u$ and $v$ in a graph $G$, the \emph{local connectivity}
$\kappa_{G}(u,v)$ is the maximum number of internally disjoint paths
connecting $u$ and $v$. Then the connectivity of $G$ is defined as
$\kappa(G)= \min\{\kappa_{G}(u,v):u,v\in V(G),u\neq v\}$; whereas
$\overline{\kappa}(G)= \max\{\kappa_{G}(u,v):u,v\in V(G),u\neq v\}$
is called the \emph{maximal local connectivity} of $G$, introduced
by Bollob\'{a}s.

Bollob\'{a}s \cite{Bollobas1} considered the problem of determining
the largest number $f(n;\overline{\kappa}\leq \ell)$ of edges for
graphs with $n$ vertices and maximal local connectivity at most
$\ell$. In other words, $f(n;\overline{\kappa}\leq \ell)=\max\{e(G):
|V(G)|=n\ \text{and}\ \overline{\kappa}(G)\leq \ell\}$. Determining
the exact value of $f(n;\overline{\kappa}\leq \ell)$ has got a great
attention and many results have been worked out, see
\cite{Bollobas1, Bollobas2, Leonard1, Leonard2, Leonard3, Mader1,
Mader2, Thomassen}.

For a graph $G(V,E)$ and a subset $S$ of $V$ where $|S|\geq 2$, an
\emph{$S$-Steiner tree} or a \emph{Steiner tree connecting $S$} is a
subgraph $T(V',E')$ of $G$ which is a tree such that $S\subseteq
V'$. Two $S$-Steiner trees $T_1$ and $T_2$ are called
\emph{internally disjoint} if $E(T_1)\cap E(T_2)=\varnothing$ and
$V(T_1)\cap V(T_2)=S$. Note that $T_1$ and $T_2$ are vertex-disjoint
in $G\setminus S$. For $S\subseteq V$, the \emph{generalized local
connectivity} $\kappa(S)$ is the maximum number of internally
disjoint trees connecting $S$ in $G$. The \emph{generalized
$k$-connectivity} is defined as
$\kappa_k(G)=\min\{\kappa(S):S\subseteq V(G),|S|=k\}$, which was
introduced by Chartrand et al. in 1984 \cite{Chartrand}. Some
results have been worked out on the generalized connectivity, we
refer the reader to \cite{LL, LLZ} for details.

In analogue to the classical maximal local connectivity, another
parameter $\overline{\kappa}_k(G)=\max\{\kappa(S):S\subseteq
V(G),|S|=k\}$, called the \emph{maximal generalized local
connectivity} of $G$, was introduced in \cite{lilimao}. The authors
studied the largest number $f(n;\overline{\kappa}_3\leq2)$ of edges
for graphs with $n$ vertices and at most two internally disjoint
Steiner trees connecting any three vertices.

In this paper, we will study the problem of determining the largest
number $f(n;\overline{\kappa}_k=1)$ of edges for graphs with $n$
vertices and maximal generalized local connectivity exactly equal to
$1$, that is, $f(n;\overline{\kappa}_k=1)=\max\{e(G):|V(G)|=n \
\text{and} \ \overline{\kappa}_k(G)=1\}$. It is easy to see that for
$k=2$, $f(n;\overline{\kappa}=1)=n-1$, and if a graph $G$ satisfies
$\overline{\kappa}(G)=1$, then $G$ must be a tree. It turns out that
for $k\geq 3$, the problem is not easy to attack.

This paper is organized as follows. In Section 2, we introduce a
graph operation to describe three graph classes. In Section 3, we
first estimate the exact value of $f(n;\overline{\kappa}_3=1)$, that
is, $f(n;\overline{\kappa}_3=1 )=\frac{4n-3-r}{3}$ for $n=3r+q$,
$0\leq q\leq2$. Then, in Section 4, we determine
$f(n;\overline{\kappa}_4=1)$ for $n=4r+q$, $0\leq q\leq3$. Finally,
in Section 5, $f(n;\overline{\kappa}_n=1)$ is determined to be ${n-1
\choose 2}+1$. Furthermore, we characterize extremal graphs
attaining each of these values. For general $k$, we get the lower
bound of $f(n;\overline{\kappa}_k=1)$ by constructing extremal
graphs for $n=r(k-1)+q$, $0\leq q\leq k-2$.

\section{Preliminaries}

In this section, we first give some definitions frequently used in
the sequel, and then introduce a graph operation to describe three
graph classes.

For a graph $G$, we say a path $P=u_1u_2\cdots u_q$ is an $ear$ of
$G$ if $V(G)\cap V(P)=\{u_1,u_q\}$. If $u_1\neq u_q$, $P$ is an
$open ~ear$; otherwise $P$ is a $closed~ ear$. By $\ell(P)$ we
denote the length of $P$ and $C_q$ a cycle on $q$ vertices.

Let $H_1$ and $H_2$ be two connected graphs. We obtain a graph
$H_1+H_2$ from $H_1$ and $H_2$ by joining an edge $uv$ between $H_1$
and $H_2$ where $u\in H_1$, $v\in H_2$. We call this operation the
{\it adding operation}.

$\{C_3\}^i+\{C_4\}^j+\{C_5\}^k+\{K_1\}^\ell$ is a class of connected
graphs obtained from $i$ copies of $C_3$, $j$ copies of $C_4$, $k$
copies of $C_5$ and $\ell$ copies of $K_1$ by the adding operations
such that $0\leq i\leq \lfloor\frac{n}{3}\rfloor$, $0\leq j\leq 2$,
$0\leq k\leq 1$, $0\leq \ell \leq 2$ and $3i+4j+5k+\ell=n$. Note
that these operations are taken in an arbitrary order.

Let $n=3r+q$, $0\leq q\leq2$. If $q=0$,
$\mathcal{G}_n^{0}=\{C_3\}^{r}$. If $q=1$,
$\mathcal{G}_n^{1}=\{C_3\}^{r}+K_1$ or $\{C_3\}^{r-1}+C_4$. If
$q=2$, $\mathcal{G}_n^{2}=\{C_3\}^{r}+\{K_1\}^2$ or
$\{C_3\}^{r-1}+C_4+K_1$ or $\{C_3\}^{r-1}+C_5$ or
$\{C_3\}^{r-2}+\{C_4\}^2$.

Let $A,B,D_1,D_2,D_3,F_1,F_2,F_3,F_4$ be the graphs shown in Figure
1.

\begin{figure}[h,t,b,p]
\begin{center}
\scalebox{0.9}[0.9]{\includegraphics{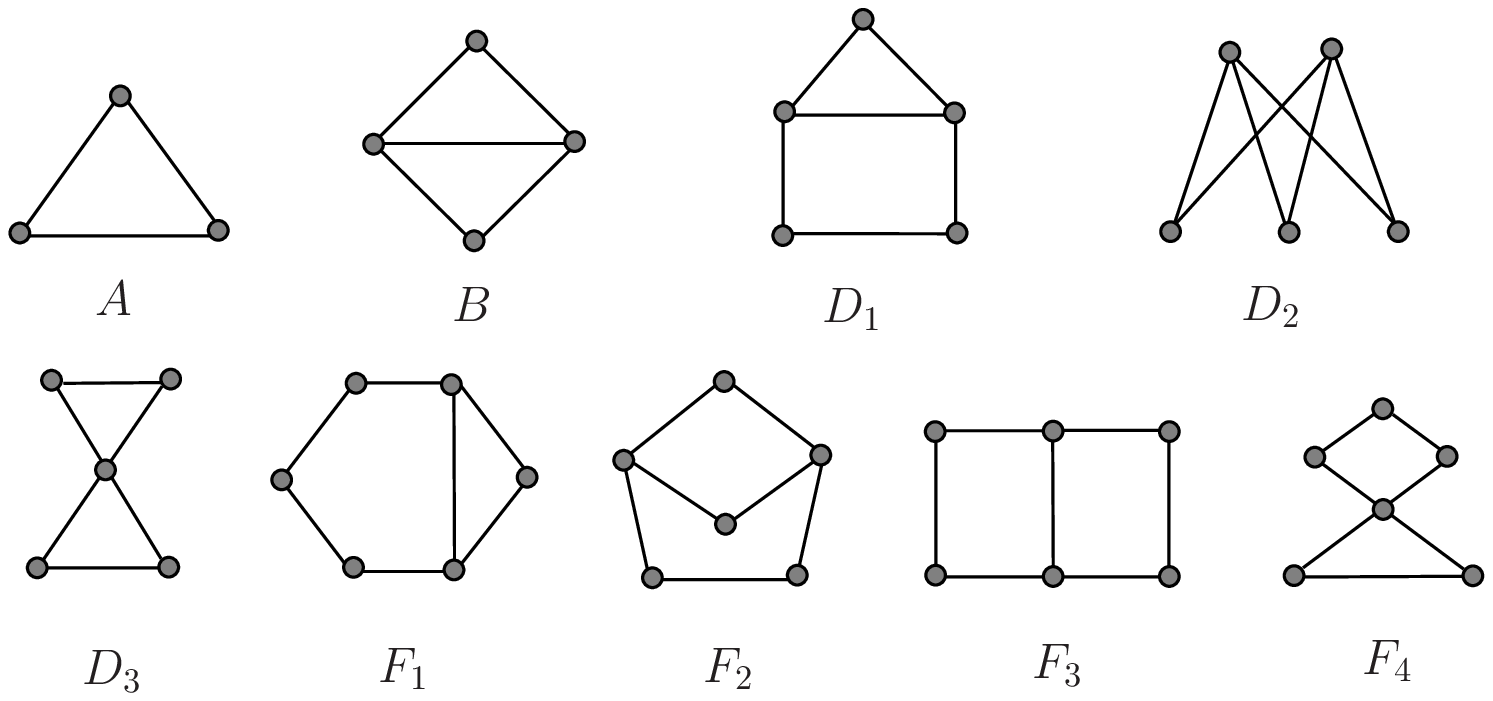}}\\[20pt]

Figure~1. The graphs used for the second graph class

\end{center}
\end{figure}

$\{A\}^{i_0}+\{B\}^{i_1}+\{D_1\}^{i_2}+\{D_2\}^{i_3}+\{D_3\}^{i_4}
+\{F_1\}^{i_5}+\{F_2\}^{i_6}+\{F_3\}^{i_7}+\{F_4\}^{i_8}+\{K_1\}^{i_9}$
is composed of another connected graph class by the adding
operations such that (1) $0\leq i_0\leq2$, $0\leq i_1\leq
\lfloor\frac{n}{4}\rfloor$, $0\leq i_2+i_3+i_4\leq2$, $0\leq
i_5+i_6+i_7+i_8\leq1$, $0\leq i_9\leq2$; (2) $D_i$ and $F_j$ are not
simultaneously in a graph belonging to this graph class where $1\leq
i\leq 3$, $1\leq j\leq 4$; (3)
$3i_0+4i_1+5(i_2+i_3+i_4)+6(i_5+i_6+i_7+i_8)+i_9=n$.

Let $n=4r+q$, $0\leq q\leq3$. If $q=0$, $\mathcal{H}_n^0=\{B\}^r$.
If $q=1$, $\mathcal{H}_n^1=\{B\}^r+K_1$ or $\{B\}^{r-1}+D_i$ ($1\leq
i\leq 3$). If $q=2$, $\mathcal{H}_n^2=\{B\}^r+\{K_1\}^2$ or
$\{B\}^{r-1}+\{A\}^2$ or $\{B\}^{r-1}+D_i+K_1$ or
$\{B\}^{r-2}+D_i+D_j$ ($1\leq i,j\leq3$) or $\{B\}^{r-1}+F_i$
($1\leq i\leq 4$). If $q=3$, $\mathcal{H}_n^3=\{B\}^r+A$.

Define the third graph class as follows: for $n=5$,
$\mathcal{K}_5=\{G:|V(G)|=5,e(G)=7\}$; for $n\geq 6$,
$\mathcal{K}_n=K_{n-1}+K_1$.

The following observation is obvious.

\begin{obser}\label{obser-1}
Let $G$ and $G^{'}$ be two connected graphs. If $G^{'}$ is a
subgraph of $G$ and $\overline{\kappa}_k(G^{'})\geq2$, then
$\overline{\kappa}_k(G)\geq2$.
\end{obser}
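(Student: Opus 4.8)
The plan is to show that the parameter $\overline{\kappa}_k$ is monotone with respect to the subgraph relation, which should follow directly from unwinding the definitions rather than from any structural argument. First I would invoke the hypothesis $\overline{\kappa}_k(G^{'}) \geq 2$: since $\overline{\kappa}_k(G^{'}) = \max\{\kappa(S) : S \subseteq V(G^{'}),\, |S| = k\}$, there exists a vertex set $S \subseteq V(G^{'})$ with $|S| = k$ and $\kappa(S) \geq 2$ when computed in $G^{'}$. By the definition of $\kappa(S)$, this means $G^{'}$ contains two internally disjoint $S$-Steiner trees $T_1$ and $T_2$, that is, two subtrees of $G^{'}$ each containing $S$ with $E(T_1) \cap E(T_2) = \varnothing$ and $V(T_1) \cap V(T_2) = S$.

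The key step is to observe that these same two trees serve as witnesses in $G$. Because $G^{'}$ is a subgraph of $G$, we have $V(G^{'}) \subseteq V(G)$ and $E(G^{'}) \subseteq E(G)$, so $T_1$ and $T_2$ are subtrees of $G$ as well, and $S \subseteq V(G^{'}) \subseteq V(G)$ is still a set of $k$ vertices of $G$. Hence $T_1$ and $T_2$ are $S$-Steiner trees in $G$. Crucially, the defining conditions for internal disjointness, namely $E(T_1) \cap E(T_2) = \varnothing$ and $V(T_1) \cap V(T_2) = S$, are intrinsic properties of the pair $(T_1, T_2)$ together with the set $S$; they make no reference to the ambient graph and are therefore inherited verbatim. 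Consequently $\kappa(S) \geq 2$ when computed in $G$, and so $\overline{\kappa}_k(G) \geq \kappa(S) \geq 2$.

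There is essentially no obstacle here: the entire content is that internal disjointness of Steiner trees is defined purely in terms of the trees themselves, so enlarging the host graph can never destroy an existing pair of disjoint Steiner trees. The only point that merits a moment's care is confirming that the set $S$ witnessing $\overline{\kappa}_k(G^{'}) \geq 2$ is automatically an admissible $k$-subset of $V(G)$, which holds precisely because the vertex set only grows in passing from $G^{'}$ to $G$.
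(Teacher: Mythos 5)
Your proof is correct and is precisely the definitional argument the paper implicitly relies on (the paper states the observation without proof, calling it obvious): the witness set $S$ and the two internally disjoint $S$-Steiner trees in $G^{'}$ carry over verbatim to $G$ since internal disjointness is intrinsic to the pair of trees. Nothing further is needed.
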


Next we state a famous theorem which is fundamental for calculating
the number of edge-disjoint spanning trees and getting from it a
useful lemma for our following results.

\begin{thm}(Nash-Williams \cite{Nash}, Tutte \cite{Tutte})\label{thm-1}
A multigraph contains $k$ edge-disjoint spanning trees if and only
if for every partition $\mathcal{P}$ of its vertex sets it has at
least $k(|\mathcal{P}|-1)$ cross-edges, whose ends lie in different
partition sets.
\end{thm}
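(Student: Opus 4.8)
The plan is to prove the two directions separately, with the forward (necessity) direction being routine and the converse (sufficiency) carrying all the difficulty. For necessity, suppose $G$ contains $k$ edge-disjoint spanning trees $T_1,\dots,T_k$ and fix any partition $\mathcal{P}=\{V_1,\dots,V_p\}$ of $V(G)$. Contracting each block $V_i$ to a single vertex turns each $T_j$ into a connected spanning subgraph on $p$ vertices, so $T_j$ must use at least $p-1$ edges joining distinct blocks. Since the $T_j$ are edge-disjoint, the total number of cross-edges is at least $k(p-1)=k(|\mathcal{P}|-1)$, as required.

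For sufficiency I would pass to matroid language and invoke the matroid union theorem. Let $M$ be the cycle (graphic) matroid of $G$ on ground set $E=E(G)$, with rank function $r$; assume $G$ is connected, so that $r(E)=n-1$ where $n=|V(G)|$. A family of $k$ edge-disjoint spanning trees is exactly $k$ disjoint bases of $M$, that is, a $k(n-1)$-element edge set that splits into $k$ forests. By the matroid union theorem applied to $k$ copies of $M$, the maximum size of an edge set that splits into $k$ forests equals
\[
  \min_{F\subseteq E}\bigl(|E\setminus F|+k\,r(F)\bigr).
\]
Since each forest has at most $n-1$ edges, this quantity never exceeds $k(n-1)$ (as seen by taking $F=E$), and it attains $k(n-1)$ — equivalently, $G$ has $k$ edge-disjoint spanning trees — if and only if $|E\setminus F|\ge k\bigl(r(E)-r(F)\bigr)$ for every $F\subseteq E$.

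It then remains to recast this matroid inequality as the stated partition condition, and here lies the only genuinely delicate step. The key reduction is that it suffices to test the inequality on closed sets $F$: replacing $F$ by its closure leaves $r(F)$ unchanged while only enlarging $F$, so the inequality for the closure is the strongest instance. Closed sets of a graphic matroid correspond exactly to partitions $\mathcal{P}=\{V_1,\dots,V_p\}$, with $F_{\mathcal{P}}$ the set of all edges lying inside a single block and $|E\setminus F_{\mathcal{P}}|$ the number of cross-edges. Contracting $F_{\mathcal{P}}$ yields the graphic matroid of the $p$-vertex contraction of $G$, which is connected, so $r(E)-r(F_{\mathcal{P}})=p-1$. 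Thus $|E\setminus F_{\mathcal{P}}|\ge k\bigl(r(E)-r(F_{\mathcal{P}})\bigr)$ is literally the assertion that $\mathcal{P}$ has at least $k(p-1)$ cross-edges, and quantifying over all partitions closes the loop.

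The hard part, which I would either cite or isolate as a lemma, is the matroid union theorem itself; its standard proof proceeds by an exchange/augmenting-path argument establishing the min-max formula, and reproving it from scratch is the real labor. Should one wish to avoid matroid machinery, an alternative is a direct induction on $|E|$ via ear-type decompositions or a careful discharging over partitions, but the augmenting argument underlying matroid union is in my view the cleanest route. In any case the disconnected situation is handled by applying the connected argument component-wise.
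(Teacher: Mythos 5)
The paper does not prove this statement at all: it is quoted as a classical theorem with citations to Nash-Williams and Tutte, so there is no in-paper argument to compare yours against. Judged on its own terms, your outline is the standard modern route and the parts you actually carry out are sound. The necessity direction (contract each block of $\mathcal{P}$; each spanning tree must contribute at least $|\mathcal{P}|-1$ cross-edges, and edge-disjointness lets you add these up) is complete. The translation between the matroid-union min-max formula and the partition condition is also essentially right: for any $F\subseteq E$ the closure of $F$ is the set of edges lying inside the blocks of the partition of $V(G)$ into components of $(V,F)$, passing to the closure only strengthens the inequality $|E\setminus F|\ge k\bigl(r(E)-r(F)\bigr)$, and for such a closed set the deficiency $r(E)-r(F)$ is exactly $|\mathcal{P}|-1$, so the two conditions match. (One small simplification you could make: for $k\ge 1$ the partition condition applied to the partition into connected components already forces $G$ to be connected, so the component-wise reduction at the end is vacuous.)

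The substantive criticism is the one you anticipate yourself: the entire difficulty has been pushed into the matroid union theorem, which is invoked but not proved, and which is a result at least as deep as the one being established — historically it is a generalization of Nash-Williams/Tutte, not a lemma on the way to it. As submitted, the proposal is therefore a correct reduction plus a citation, not a self-contained proof. If a genuine proof is wanted, you would need to supply the augmenting/exchange argument behind matroid union (or one of the direct inductive proofs of Tutte and Nash-Williams on the number of edges, which repeatedly refines a partition violating tightness). Since the paper itself treats the theorem as a black box from the literature, citing the matroid union theorem is arguably no worse than what the paper does, but you should be explicit that this is a derivation from a cited result rather than a proof from first principles.
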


\begin{lem}\label{lem-1}
Let $M$ be an edge set of $K_n$ $(n\geq 5)$ where $0\leq |M|\leq
n-3$,  and $G$ be a graph obtained from $K_n$ by deleting $M$. Then
$G$ contains two edge-disjoint spanning trees.
\end{lem}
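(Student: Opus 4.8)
The plan is to invoke Theorem \ref{thm-1} with $k=2$: it suffices to show that for every partition $\mathcal{P}=\{V_1,\dots,V_p\}$ of $V(G)$ into $p$ nonempty parts, the number of cross-edges of $G$ with respect to $\mathcal{P}$ is at least $2(p-1)$. Fix such a partition and write $n_i=|V_i|$, so that $\sum_{i=1}^{p}n_i=n$ and we may assume $p\ge 2$ (the case $p=1$ is vacuous). The strategy is to bound from below the number of cross-edges of $G$ by comparing with $K_n$, and then to reduce the resulting estimate to a single one-variable inequality in $p$.

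First I would count cross-edges in the complete graph. In $K_n$ the number of cross-edges for $\mathcal{P}$ equals $\binom{n}{2}-\sum_{i=1}^{p}\binom{n_i}{2}$. Since $G$ is obtained from $K_n$ by deleting the $|M|\le n-3$ edges of $M$, and each deleted edge destroys at most one cross-edge, the number of cross-edges of $G$ is at least $\binom{n}{2}-\sum_{i=1}^{p}\binom{n_i}{2}-(n-3)$. Hence it is enough to prove
\[
\binom{n}{2}-\sum_{i=1}^{p}\binom{n_i}{2}-(n-3)\ \ge\ 2(p-1).
\]

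Next I would identify the worst partition shape. Because $x\mapsto\binom{x}{2}$ is convex, the sum $\sum_i\binom{n_i}{2}$ is maximized (so the left-hand side above is minimized) when the part sizes are as unbalanced as possible, namely one part of size $n-p+1$ together with $p-1$ singletons. For this extremal partition the cross-edge count of $K_n$ equals $\binom{n}{2}-\binom{n-p+1}{2}=\frac{(p-1)(2n-p)}{2}$, so it remains to verify the single inequality
\[
\frac{(p-1)(2n-p)}{2}-(n-3)\ \ge\ 2(p-1)\qquad (2\le p\le n).
\]
Writing the left-hand side minus the right-hand side as $f(p)$, a short computation gives $f(p)=\frac{-p^{2}+(2n-3)p-4n+10}{2}$, which is concave in $p$; therefore its minimum over $[2,n]$ is attained at an endpoint. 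Since $f(2)=0$ and $f(n)=\frac{(n-2)(n-5)}{2}\ge 0$ for $n\ge 5$, the inequality holds throughout, and Theorem \ref{thm-1} yields two edge-disjoint spanning trees.

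The main obstacle I anticipate is the reduction to the most unbalanced partition: one must argue carefully that deleting edges can only decrease the cross-edge count, and that convexity pins down the worst case, after which everything collapses to verifying $f(p)\ge 0$. It is reassuring that equality holds at both $p=2$ and at $p=n=5$, which accounts for the precise thresholds $|M|\le n-3$ and $n\ge 5$ in the statement and indicates that the bound is sharp.
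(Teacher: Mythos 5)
Your proposal is correct and follows essentially the same route as the paper: both invoke the Nash--Williams/Tutte theorem with $k=2$, bound the cross-edge count of $G$ below by $\binom{n}{2}-\sum_i\binom{n_i}{2}-(n-3)$, reduce to the extremal partition with one part of size $n-p+1$ and $p-1$ singletons, and verify the same quadratic inequality in $p$ (the paper factors it as $\tfrac12(p-2)(2n-p-5)$, while you check concavity and the endpoints $p=2$ and $p=n$, which is equivalent).
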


\begin{proof}
Let $\mathcal{P}$ be a partition of $V(G)$ into $p$ sets
$V_1,V_2,\cdots,V_p$  where $1\leq p\leq n$, and let $\mathcal{E}$
represent the cross-edges. Set $|V_i|=n_i$, $1\leq i\leq p$. If
$p=1$ then this case is trivial, so we suppose next that $2\leq
p\leq n$. By Theorem \ref{thm-1}, in order to obtain two
edge-disjoint spanning trees, we only need to prove that the
inequality $|\mathcal{E}|\geq {n \choose 2}-\sum\limits_{i=1}^p{n_i
\choose 2}-|M| \geq 2(p-1)$, that is equivalent to saying that ${n
\choose 2}-|M|-2(p-1)\geq \sum\limits_{i=1}^p{n_i \choose 2}$ holds.
As $|M|\leq n-3$, and $\sum\limits_{i=1}^p{n_i \choose 2}$ attains
the maximum value ${n-p+1 \choose 2}$ by $n_i=n-(p-1)$ and $n_j=1$
where $j\neq i$. We only need to prove that ${n \choose
2}-(n-3)-2(p-1)\geq {n-p+1 \choose 2}$ holds. Let $f(n,p)={n \choose
2}-(n-3)-2(p-1)-{n-p+1 \choose 2}$. Our aim is to prove that
$f(n,p)\geq 0$. $f(n,p)={n-1 \choose 2}-2(p-2)-{n-p+1 \choose
2}=\frac{1}{2}(n-1)(n-2)-2(p-2)-\frac{1}{2}[(n-1)-(p-2)](n-p)
=\frac{1}{2}[(n-1)(p-2)+(p-2)(n-p-4)]=\frac{1}{2}(p-2)(2n-p-5)$.
Since $2\leq p\leq n$ and $n\geq 5$, it follows immediately that
$f(n,p)\geq 0$.
\end{proof}

\section {The case $k=3$}

We consider the case $k=3$ in this section. At first, we begin with
a necessary and sufficient condition for $\overline{\kappa}_3(G)=1$.

\begin{pro}\label{pro-1}
Let $G$ be a connected graph. Then $\overline{\kappa}_3(G)=1$ if and
only if every cycle in $G$ has no ear.
\end{pro}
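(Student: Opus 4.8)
The plan is to prove the two implications separately, in each case arguing by contraposition. Since $G$ is connected and (for the statement to be meaningful) $|V(G)|\ge 3$, every triple $S$ admits at least one $S$-Steiner tree, so $\kappa(S)\ge 1$ for all $S$ with $|S|=3$. Hence $\overline{\kappa}_3(G)=1$ is equivalent to the assertion that \emph{no} triple $S$ admits two internally disjoint $S$-Steiner trees, and I will work with this reformulation throughout.

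For the ``only if'' part I would show the contrapositive: if some cycle $C$ has an ear $P$, then $\overline{\kappa}_3(G)\ge 2$. By Observation~\ref{obser-1} it suffices to exhibit the relevant configuration as a subgraph with maximal generalized local connectivity at least $2$. If $P$ is an open ear with endpoints $a,b\in V(C)$, then $C\cup P$ consists of three internally disjoint $a$--$b$ paths, which I relabel $P_1,P_2,P_3$ (the two arcs of $C$ together with the ear). A short case check shows at least two of them, say $P_1$ and $P_2$, have length at least $2$: if $\ell(P)\ge 2$ then $P$ together with the longer arc qualifies, while if $P$ is a chord then $a,b$ are non-adjacent on $C$ (otherwise $G$ has a multi-edge), so both arcs have length at least $2$. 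Choosing internal vertices $v_1\in P_1$, $v_2\in P_2$ and setting $S=\{a,v_1,v_2\}$, I take $T_1$ to be the union of the subpaths of $P_1,P_2$ from $a$ to $v_1$ and from $a$ to $v_2$, and $T_2$ the union of the subpaths from $v_1$ to $b$, from $v_2$ to $b$, together with the whole of $P_3$ from $b$ to $a$. One checks directly that $T_1,T_2$ are edge-disjoint, that $V(T_1)\cap V(T_2)=S$, and that both contain $S$, so $\kappa(S)\ge 2$. The closed-ear case ($a=b$), in which $P$ is a second cycle meeting $C$ only at $a$, is handled separately: picking $x\in V(C)\setminus\{a\}$ and $y\in V(P)\setminus\{a\}$ and routing the two trees through the two arcs of $C$ and the two arcs of $P$ again yields two internally disjoint Steiner trees for $\{a,x,y\}$.

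For the ``if'' part I would again argue by contraposition: assuming $\overline{\kappa}_3(G)\ge 2$, I produce a cycle with an ear. Fix a triple $S=\{x,y,z\}$ carrying two internally disjoint Steiner trees, and pass to \emph{minimal} such trees $T_1,T_2$, so that each $T_i$ is a subdivided star, i.e.\ three internally disjoint paths from a center $c_i$ to $x,y,z$ (possibly degenerating to a path when $c_i\in S$). Since $V(T_1)\cap V(T_2)=S$, the closed walk $x\to c_1\to y\to c_2\to x$ built from the $x$- and $y$-branches of $T_1$ and $T_2$ is a cycle $O$, and the remaining $z$-branches form a path from $c_1$ through $z$ to $c_2$ whose internal vertices avoid $O$; this path is then an ear of $O$. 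I expect the main obstacle to lie here, in controlling the degenerate configurations: the centers $c_1,c_2$ may coincide with vertices of $S$, some branches may be trivial, and $c_1$ may equal $c_2$ (forcing it into $S$). Each degeneracy must be examined to confirm that $O$ is a genuine cycle and that the extracted ear has both endpoints on $O$ and its internal part (possibly reducing to a chord) off $O$, so that a cycle with an ear is exhibited in every case; organizing this case analysis cleanly is the crux of the argument.
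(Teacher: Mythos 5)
Your ``only if'' direction is essentially the paper's argument: both proofs locate the subgraph $C\cup P$, choose a suitable triple of vertices on it, and exhibit two internally disjoint Steiner trees (the paper takes the two attachment vertices' neighborhood of choices slightly differently, but the construction and its verification are the same in substance). The ``if'' direction is where you genuinely diverge. The paper argues directly: if no cycle has an ear, then every maximal bridgeless subgraph of $G$ is a cycle and every edge incident to it is a cut edge, so for any triple $S$ the connecting tree is forced, hence unique; this is short but rests on an ``it is immediate to check'' step. You argue contrapositively: from two internally disjoint $S$-Steiner trees you pass to minimal subtrees, use the fact that a minimal Steiner tree on three terminals is a subdivided star with center $c_i$ (possibly in $S$), assemble a cycle $O$ from the $x$- and $y$-branches of the two trees, and read off the $z$-branches as an ear. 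This is sound and every step is explicitly checkable, which is a real advantage over the paper's version; the cost is the degeneracy analysis you defer, and one sub-case there needs more than mere verification: if $c_1=c_2=z$ (both minimal trees are paths with $z$ interior), your closed walk $x\to c_1\to y\to c_2\to x$ traverses $z$ twice and is a figure-eight rather than a cycle, and the $z$-branches are trivial, so no ear is produced by the stated recipe. The fix is easy --- relabel so the shared center plays the role of $x$, or equivalently pair the two $x$-branches into a cycle and the two $y$-branches into a closed ear at $z$ --- and all the other degeneracies (a center lying in $S$, trivial branches, $c_1=c_2\in\{x,y\}$) do go through exactly as you wrote them, so I would count this as a flagged loose end rather than a gap, but it should be spelled out if the contrapositive route is adopted.
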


\begin{proof}
To settle the ``only if" part, assume, to the contrary, that $C$ is
a cycle in $G$ and $P$ is an ear of $C$. Set $V(C)\cap V(P)=\{u,v\}$
where $u$ and $v$ may be the same vertex. If $\ell(P)=1$, then $P$
is an open ear, pick a vertex from $uCv$ and $vCu$ respectively, say
$u_1$ and $u_2$, $T_1=u_2Cu_1$ and $T_2=u_1Cu_2\cup uv$ are two
internally disjoint trees connecting $\{u,u_1,u_2\}$, a
contradiction to $\overline{\kappa}_3(G)=1$. If $\ell(P)\geq2$, pick
up a vertex in $C\setminus \{u,v\}$ and $P\setminus \{u,v\}$,
respectively, say $u_1$ and $u_2$, then there are also two
internally disjoint trees connecting $\{u,u_1,u_2\}$, another
contradiction.

To prove the ``if" part, let $S$ be a set of any three vertices.  We
need to prove that $\kappa_3(S)=1$. Since every cycle in $G$ has no
ear, then every maximal bridgeless subgraph of $G$ is a cycle and
each edge incident with it is a cut edge. If two vertices in $S$
belong to different cycles $C_1$ and $C_2$, then it is immediate to
check that only one tree connects $S$, since the cut edge in the
path from $C_1$ to $C_2$ can be used only once. If three vertices in
$S$ belong to a cycle, then it is immediate to see that only one
tree connects $S$. Thus $\overline{\kappa}_3(G)=1$.
\end{proof}

\begin{lem}\label{lem-2}
Let $G$ be a connected graph of order 5 and size at least 6. Then
$\overline{\kappa}_3(G)\geq2$.
\end{lem}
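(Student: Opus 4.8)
The plan is to argue by contraposition through Proposition \ref{pro-1}. Since $G$ is connected, $\overline{\kappa}_3(G)\geq 1$ holds automatically, so it suffices to rule out $\overline{\kappa}_3(G)=1$; by Proposition \ref{pro-1} this is the same as producing a cycle of $G$ that has an ear. I would therefore assume, for contradiction, that every cycle of $G$ has no ear, and show that this caps the size of $G$ at $5$, which is incompatible with $e(G)\geq 6$.

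The key structural step is the one already extracted inside the proof of Proposition \ref{pro-1}: if no cycle has an ear, then every maximal bridgeless subgraph of $G$ is a cycle and every remaining edge is a cut edge. What I must add is that these cycles are pairwise vertex-disjoint. Indeed, if two of them shared a vertex, their union would still be bridgeless (deleting any single edge leaves the configuration connected through the shared vertex), so the maximal bridgeless subgraph containing them would not be a single cycle --- equivalently, one cycle would carry a closed ear routed through the other, contradicting our assumption. Hence $G$ consists of pairwise vertex-disjoint cycles joined in a tree-like fashion by bridges.

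With this description the edge count is immediate. Contracting each cycle to a single vertex turns $G$ into a tree, so if $G$ has $m$ cycles then $e(G)=n+m-1$; equivalently, the cycle rank $e(G)-n+1$ equals the number $m$ of cycle blocks. Because the cycles are vertex-disjoint and each uses at least three vertices, $3m\leq n=5$, forcing $m\leq 1$ and therefore $e(G)\leq n=5$. This contradicts $e(G)\geq 6$, and the lemma follows.

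I expect the only real obstacle to be the vertex-disjointness claim in the second step, since it is exactly the point that distinguishes graphs with $\overline{\kappa}_3=1$ from near-misses such as two triangles sharing a vertex (which does contain a closed ear, and hence has $\overline{\kappa}_3\geq 2$). Everything after that is a one-line counting argument, and one could alternatively phrase the whole proof as: on $n=5$ vertices, $e(G)\geq 6$ gives cycle rank at least $2$, which is impossible for a graph whose cycles are vertex-disjoint blocks, so some cycle must carry an ear.
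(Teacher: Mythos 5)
Your proof is correct, but it takes a genuinely different route from the paper's. The paper reduces to a connected spanning subgraph $H$ of size exactly $6$ (via Observation \ref{obser-1}), enumerates the four connected graphs of order $5$ and size $6$ (namely $D_1,D_2,D_3,B+K_1$), and checks by inspection that each contains a cycle with an ear, so Proposition \ref{pro-1} applies. You instead push the structural consequence of Proposition \ref{pro-1} further: if no cycle has an ear, the maximal bridgeless subgraphs are pairwise vertex-disjoint cycles (your argument for disjointness --- that two cycles meeting in a vertex would form a bridgeless non-cycle subgraph, equivalently a closed or open ear --- is sound), contracting them yields a tree, hence $e(G)=n+m-1$ with $3m\le n$, giving $e(G)\le 5$ when $n=5$. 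Both proofs are valid; the paper's is a quick finite check tailored to $n=5$, while yours is a general inequality $e(G)\le n+\lfloor n/3\rfloor-1$ that in fact already gives the upper bound of Theorem \ref{thm-2} for all $n$ without the induction on cut edges, at the cost of having to justify the vertex-disjointness and contraction steps that the paper avoids.
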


\begin{proof}
Let $H$ be a connected spanning subgraph of $G$ and $H$ has size
exactly 6. Since the possible connected graphs of order 5 and size 6
are $D_1$, $D_2$, $D_3$ and $B+K_1$, it is easy to see that each of
these graphs has a cycle with an ear. Then by Proposition
\ref{pro-1}, $\overline{\kappa}_3(H)\geq2$ follows. By Observation
\ref{obser-1}, it follows that $\overline{\kappa}_3(G)\geq2$.
\end{proof}

\begin{thm}\label{thm-2}
Let $n=3r+q$, where $0\leq q \leq 2$, and let $G$ be a connected
graph of order $n$ such that $\overline{\kappa}_3(G)=1$. Then
$e(G)\leq \frac{4n-3-q}{3}$, with equality if and only if $G\in
\mathcal{G}_n^q$.
\end{thm}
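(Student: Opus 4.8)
The plan is to read off the structure of $G$ directly from Proposition \ref{pro-1} and then reduce the whole statement to counting independent cycles. By Proposition \ref{pro-1}, the hypothesis $\overline{\kappa}_3(G)=1$ is equivalent to saying that no cycle of $G$ has an ear, and (as recorded in its proof) this forces every maximal bridgeless subgraph of $G$ to be a single cycle while every remaining edge is a bridge. First I would make explicit the consequence that two distinct cycles of $G$ can share no vertex: if two cycles met in a vertex, then one of them would trace out a closed or open ear of the other, contradicting the ``no ear'' condition. Hence $G$ is exactly a connected graph built, via the adding operation of Section 2, from a family of pairwise vertex-disjoint cycles together with some isolated vertices, linked up by bridges.

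With this picture the upper bound becomes a cyclomatic-number computation. Since $G$ is connected, its cycle space has dimension $e(G)-n+1$, and because the only cycles of $G$ are its vertex-disjoint cycle blocks, this dimension equals the number $m$ of cycles; thus $e(G)=n-1+m$, and maximizing $e(G)$ amounts to maximizing $m$. Each cycle uses at least three vertices and the cycles are vertex-disjoint, so $3m\le n=3r+q$, whence $m\le r$ (as $m$ is an integer and $0\le q\le 2$). Substituting yields $e(G)\le n-1+r=4r+q-1=\frac{4n-3-q}{3}$, the claimed bound.

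For the equality case I would analyse exactly when $m=r$. Writing $c_1,\dots,c_r$ for the cycle lengths and $L=\sum_i c_i$, the $n-L$ vertices lying outside the cycles are the $K_1$-vertices attached by bridges, so $n-L\ge 0$ forces $0\le L-3r\le q$. Setting the total excess $s=L-3r=\sum_i(c_i-3)$, one gets $0\le s\le q$ with exactly $q-s$ leftover vertices. A short casework on the partitions of $s$ into cycle-length excesses (each $C_4$ contributing $1$ and each $C_5$ contributing $2$, and no longer cycle being possible since $s\le 2$) then reproduces precisely the members of $\mathcal{G}_n^q$; for instance, when $q=2$ the value $s=2$ splits as one $C_5$ or as two $C_4$'s, giving $\{C_3\}^{r-1}+C_5$ and $\{C_3\}^{r-2}+\{C_4\}^2$. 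Conversely, every graph in $\mathcal{G}_n^q$ consists of $r$ vertex-disjoint cycles joined by bridges, so no cycle has an ear; by Proposition \ref{pro-1} it satisfies $\overline{\kappa}_3(G)=1$, and it has $e(G)=n-1+r=\frac{4n-3-q}{3}$ edges, attaining the bound.

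The conceptual crux is the structural extraction from Proposition \ref{pro-1}, and in particular the vertex-disjointness of the cycles; once that is in hand, the identity $e(G)=n-1+m$ makes the bound essentially mechanical. The genuinely delicate-but-routine part, where I would spend the most care, is the equality casework: one must verify that the constraints $0\le s\le q$ and $q-s\ge 0$ enumerate exactly the listed classes (matching the side conditions $0\le j\le 2$, $0\le k\le 1$, $0\le \ell\le 2$ in the definition of the cycle-and-isolated-vertex class) with neither omission nor double-counting.
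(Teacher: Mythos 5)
Your proposal is correct, but it takes a genuinely different route from the paper. The paper proves Theorem \ref{thm-2} by induction on $n$: it checks the base cases $n=3,4,5$ by hand (using Lemma \ref{lem-2}), disposes of the bridgeless case via Proposition \ref{pro-1} ($G$ must be a single cycle), and otherwise deletes a cut edge, applies the induction hypothesis to the two components, and runs a residue-by-residue bookkeeping of $q_1,q_2 \pmod 3$ to recombine the bounds and the extremal classes. You instead extract the global structure once and for all: Proposition \ref{pro-1} plus your observation that two distinct cycles cannot share a vertex (else one is an ear of the other) shows $G$ is a tree-like arrangement of pairwise vertex-disjoint cycles and single vertices joined by bridges, so $e(G)=n-1+m$ where $m$ is the number of cycles, and $3m\le n$ gives the bound immediately; the equality analysis via the excess $s=\sum_i(c_i-3)\le q\le 2$ reproduces $\mathcal{G}_n^q$ exactly, including the converse direction. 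Your argument is shorter and makes the extremal structure transparent, avoiding the paper's base cases and residue casework; the paper's inductive template has the advantage of carrying over almost verbatim to the $k=4$ case in Section 4, where the blocks (cycle plus at most one ear) are more varied and a direct enumeration would be heavier. The only point where you should add a line or two of care is the vertex-disjointness claim: you should note that a second cycle meeting $C$ yields an ear of $C$ either as a chord (length-one open ear), as a maximal segment with interior outside $V(C)$ (open ear), or as the whole second cycle when the intersection is a single vertex (closed ear); with that spelled out the argument is complete.
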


\begin{proof}
We apply induction on $n$. For $n=3$, $e(G)\leq 3$, and let
$G=C_3\in \mathcal{G}_n^0$. For $n=4$, if $G=K_4\setminus e$, then
there exists a cycle $C_3$ with an open ear of length 2, which
contradicts to Proposition \ref{pro-1}. Similarly, $G\neq K_4$. So
$G$ is obtained from $K_4$ by deleting two edges arbitrarily, that
is, $G=C_3+K_1$ or $C_4$, and then $G\in \mathcal{G}_n^1$. For
$n=5$, by Lemma \ref{lem-2}, $e(G)\leq 5$ and if $e(G)=5$, then
$G=C_3+\{K_1\}^2$ or $C_4+K_1$ or $C_5$, and then $G\in
\mathcal{G}_n^2$. Let $n\geq 6$. Assume that the assertion holds for
graphs of order less than $n$. We will show that the assertion holds
for graphs of order $n$. We distinguish two cases according to
whether $G$ has cut edges.

If $G$ has no cut edge, then $G$ is bridgeless, and combining with
Proposition \ref{pro-1}, $G$ is a cycle. Then
$e(G)=n<\frac{4n-5}{3}$, since $n\geq6$.

Suppose that there exists at least one cut edge in $G$. Pick up one,
say $e$. Let $G_1$ and $G_2$ be two connected components of
$G\setminus e$. Set $V(G_1)=n_1$, $V(G_2)=n_2$ where $n_1+n_2=n$.
Clearly, $e(G)=e(G_1)+e(G_2)+1$. Furthermore, set $n_1\equiv q_1 \
(mod~3)$, $n_2\equiv q_2 \ (mod~3)$ where $q_1,q_2\in\{0,1,2\}$.

If $q_1=0$ or $q_2=0$, without loss of generality, say $q_1=0$. By
induction hypothesis, $e(G_1)\leq \frac{4n_1-3}{3}$, $e(G_2)\leq
\frac{4n_2-3-q_2}{3}$. If $e(G_1)<\frac{4n_1-3}{3}$ or $e(G_2)<
\frac{4n_2-3-q_2}{3}$, then $e(G)<\frac{4n-3-q_2}{3}$. If
$e(G_1)=\frac{4n_1-3}{3}$ and $e(G_2)= \frac{4n_2-3-q_2}{3}$, then
by induction hypothesis, $G_1\in \mathcal{G}_{n_1}^0$, $G_2\in
\mathcal{G}_{n_2}^{q_2}$. It follows that $G=G_1+G_2\in
\mathcal{G}_n^{q_2}$ and $e(G)=\frac{4n-3-q_2}{3}$.

If $q_1=1$ and $q_2=1$, by hypothesis induction, $e(G_1)\leq
\frac{4n_1-4}{3}$, $e(G_2)\leq \frac{4n_2-4}{3}$. If
$e(G_1)<\frac{4n_1-4}{3}$ or $e(G_2)< \frac{4n_2-4}{3}$, then
$e(G)<\frac{4n-5}{3}$. If $e(G_1)=\frac{4n_1-4}{3}$ and
$e(G_2)=\frac{4n_2-4}{3}$, then by induction hypothesis, $G_1\in
\mathcal{G}_{n_1}^1$, $G_2\in \mathcal{G}_{n_2}^1$. It follows that
$G\in \mathcal{G}_n^2$ and $e(G)=\frac{4n-5}{3}$.

If $q_1=\{1,2\}$ and $q_2=2$, then $e(G_1)\leq \frac{4n_1-3-q_1}{3}$
and $e(G_2)\leq \frac{4n_2-5}{3}$. Thus
$e(G)\leq\frac{4n-5-q_1}{3}<\frac{4n-2-q_1}{3}$.
\end{proof}

So, we get the following result for $k=3$.
\begin{thm}\label{cor-1}
$f(n;\overline{\kappa}_3=1)=\frac{4n-3-q}{3}$, where $n=3r+q$ and
$0\leq q \leq 2$.
\end{thm}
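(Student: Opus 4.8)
The plan is to read this value off directly from Theorem~\ref{thm-2}, since by definition $f(n;\overline{\kappa}_3=1)=\max\{e(G):|V(G)|=n,\ \overline{\kappa}_3(G)=1\}$. I would establish the two inequalities $f(n;\overline{\kappa}_3=1)\geq\frac{4n-3-q}{3}$ and $f(n;\overline{\kappa}_3=1)\leq\frac{4n-3-q}{3}$ separately, the first by exhibiting an extremal graph and the second as an upper bound valid for every admissible $G$.

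For the lower bound I would display a single witness. Take any $G\in\mathcal{G}_n^q$; for instance $\{C_3\}^r$ when $q=0$, $\{C_3\}^r+K_1$ when $q=1$, and $\{C_3\}^r+\{K_1\}^2$ when $q=2$. Each such graph is connected of order $n$ and consists only of short cycles joined by bridges, so no cycle carries an ear and Proposition~\ref{pro-1} gives $\overline{\kappa}_3(G)=1$. A direct edge count, in which the $r$ triangles contribute $3r$ edges and the adding operations contribute the bridges needed to link all the blocks into one component, yields $e(G)=4r-1,\ 4r,\ 4r+1$ for $q=0,1,2$, i.e. exactly $\frac{4n-3-q}{3}$ in each case. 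Hence $f(n;\overline{\kappa}_3=1)\geq\frac{4n-3-q}{3}$.

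For the upper bound, let $G$ be any graph of order $n$ with $\overline{\kappa}_3(G)=1$. If $G$ is connected, Theorem~\ref{thm-2} gives $e(G)\leq\frac{4n-3-q}{3}$ at once. If $G$ is disconnected, I would reduce to the connected case: choose two components and join them by a new edge $e$ to obtain a graph $G'$ of the same order with $e(G')=e(G)+1$. A bridge lies on no cycle and, since a path using it would have to leave a component and return across the same edge, it creates no ear of any existing cycle either; thus $G'$ has no cycle with an ear, so Proposition~\ref{pro-1} gives $\overline{\kappa}_3(G')\leq1$, and as $G'$ is connected of order $n\geq3$ we get $\overline{\kappa}_3(G')=1$. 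Applying Theorem~\ref{thm-2} to $G'$ then yields $e(G)=e(G')-1\leq\frac{4n-3-q}{3}-1<\frac{4n-3-q}{3}$. So the bound holds for every admissible $G$, and together with the lower bound this proves the stated equality.

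The only genuinely non-routine point is this final reduction, namely confirming that the maximum in the definition of $f$ is attained by a connected graph rather than by some disconnected configuration; once that is settled, the rest is either a direct appeal to Theorem~\ref{thm-2} or the elementary bookkeeping of the edge counts of the graphs in $\mathcal{G}_n^q$.
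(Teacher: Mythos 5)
Your proposal is correct and takes essentially the same route as the paper, which states this theorem as an immediate consequence of Theorem~\ref{thm-2}: the upper bound for connected graphs together with the graphs of $\mathcal{G}_n^q$ (whose edge counts you verify) realizing it. Your extra reduction of disconnected graphs to the connected case via adding bridges is sound and fills in a point the paper leaves implicit.
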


\section{The case $k=4$}

In this section, we turn to consider the case that $k=4$. Similarly,
we will give a necessary and sufficient condition for
$\overline{\kappa}_4(G)=1$. First of all, we begin with a claim
useful for simplifying our argument. Let $P_1=u_1w_1w_2\cdots
w_kv_1$ be an ear of a cycle $C$. Set $H=C\cup P_1$ and add another
ear $P_2=u_2w_1^{'}w_2^{'}\cdots w_l^{'}v_2$ to $H$. We claim that
there is always a cycle $C^{'}$ in $H\cup P_2$ which has two ears.
If $u_2,v_2\in V(C)$, then $C^{'}=C_1^{*}$. If $u_2,v_2\in V(P_1)$,
then $C^{'}=C_2^{*}$. If $u_2\in v_1Cu_1$, $v_2\in V(P_1)$ and $P_1$
is an open ear, then $C^{'}=C_3^{*}$. If $u_2\in v_1Cu_1$, $v_2\in
V(P_1)$ and $P_1$ is a closed ear, then $C^{'}=C_4^{*}$. $C_i^{*}$
is shown in Figure 2 for $1\leq i\leq 4$.

\begin{figure}[h,t,b,p]
\begin{center}
\scalebox{1}[1]{\includegraphics{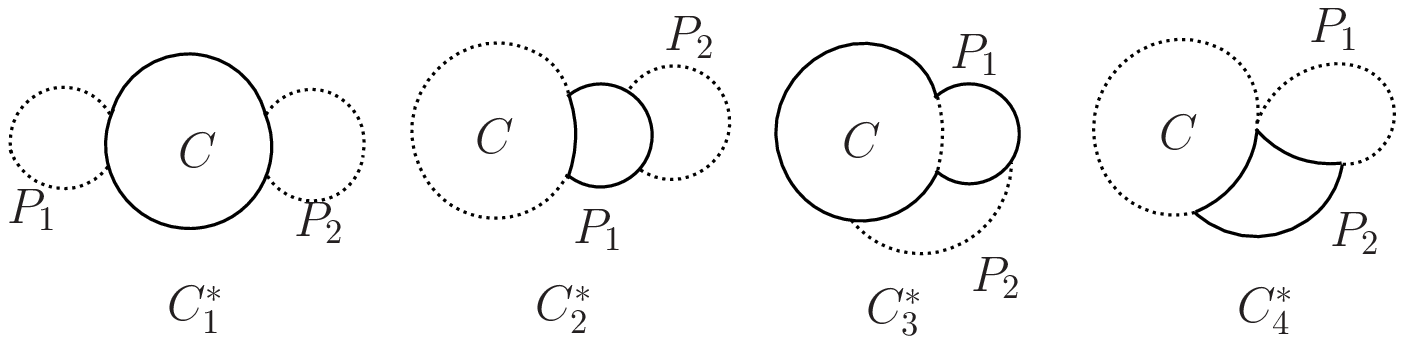}}\\[20pt]

Figure~2. $C_i^{*}$
\end{center}
\end{figure}

\begin{pro}\label{pro-2}
Let $G$ be a connected graph. Then $\overline{\kappa}_4(G)=1$ if and
only if every cycle in $G$ has at most one ear.
\end{pro}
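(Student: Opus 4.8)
The plan is to mirror the structure of the proof of Proposition \ref{pro-1}, which characterizes $\overline{\kappa}_3(G)=1$ by the condition that every cycle has no ear. For the case $k=4$, the threshold shifts: a single ear on a cycle is now permissible, but two ears are not. I would establish both directions as implications about the existence of two internally disjoint Steiner trees on a carefully chosen $4$-set $S$.

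For the ``only if'' direction, I would argue contrapositively: suppose some cycle $C$ in $G$ carries two ears. The key observation is that the claim proved just before the proposition reduces this situation to one of only four canonical configurations $C_1^{*},\dots,C_4^{*}$ shown in Figure~2, since whenever a cycle has two ears one can always locate a (possibly different) cycle $C'$ exhibiting two ears in one of these four standard forms. Thus it suffices to exhibit, for each of $C_1^{*},C_2^{*},C_3^{*},C_4^{*}$, a choice of four vertices $S$ together with two internally disjoint $S$-Steiner trees; by Observation \ref{obser-1} this forces $\overline{\kappa}_4(G)\geq 2$. In each canonical picture the two ears supply enough independent connecting routes that a natural $4$-set (typically one vertex from each ``arc'' created by the attachment points of the two ears) admits two edge-disjoint and internally vertex-disjoint trees. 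This is a finite case check rather than a general argument.

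For the ``if'' direction, assume every cycle in $G$ has at most one ear, and let $S$ be an arbitrary set of four vertices; I must show $\kappa(S)=1$. The structural consequence of the hypothesis is that $G$ is built from cycles, cycles-with-a-single-ear (theta-like blocks), and bridges, so that the ``two-tree capacity'' across any separating edge is bounded by one. Concretely, between different $2$-edge-connected blocks the trees must pass through cut edges, each usable only once, and within a single block — a cycle or a cycle with exactly one ear — one checks directly that no four-vertex set admits two internally disjoint Steiner trees, because after deleting $S$ the block does not split into two independent spanning structures. Assembling these block-level facts along the block-cut tree of $G$ yields $\kappa(S)=1$ for every $4$-set $S$, hence $\overline{\kappa}_4(G)=1$.

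The main obstacle I anticipate is the ``if'' direction, and specifically the within-a-block analysis: unlike the $k=3$ case, where admissible blocks are just cycles, here a block may be a cycle carrying one ear, and one must verify carefully that \emph{no} placement of four vertices on such a theta-like block yields two internally disjoint trees. The subtlety is that a single ear already provides a second route between its two attachment points, so the argument must confirm that this extra route cannot be parlayed into a full second Steiner tree once all four terminals and the internal-disjointness requirement are imposed. Handling the interaction between the ear's endpoints and the four chosen terminals — and ruling out every way of splitting the block's edges into two connecting trees — is where the real work lies.
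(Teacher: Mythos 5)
Your proposal follows essentially the same route as the paper: the ``only if'' direction is handled by a finite case analysis of the configurations in which a cycle carries two ears, exhibiting four terminals and two internally disjoint Steiner trees in each configuration (the paper does exactly this via Figure~3, with the same length-one-ear normalization), and the ``if'' direction rests on the same structural observation that every maximal bridgeless subgraph is a cycle or a cycle with one ear, so that cut edges can serve only one tree and a within-block check finishes the argument. Your closing remark correctly identifies the theta-block verification as the step the paper leaves as ``easy to check.''
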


\begin{proof}
To settle the ``only if" part, let $C$ be a cycle in $G$. Assume, to
the contrary, that $C$ has two ears $P_1$ and $P_2$. In Figure 3, we
list all cases that $C$ has two ears. The marked dots are the chosen
four vertices, and different trees are marked with different lines.
Note that if an ear $P$ of $C$ has length 1, then it together with a
segment of $C$ forms a cycle, and we can replace it with the
according segment such that an ear of a cycle has length at least 2.
From Figure 3, we can find two internally disjoint trees connecting
four vertices in $G$, a contradiction.

\begin{figure}[h,t,b,p]
\begin{center}
\scalebox{0.9}[0.9]{\includegraphics{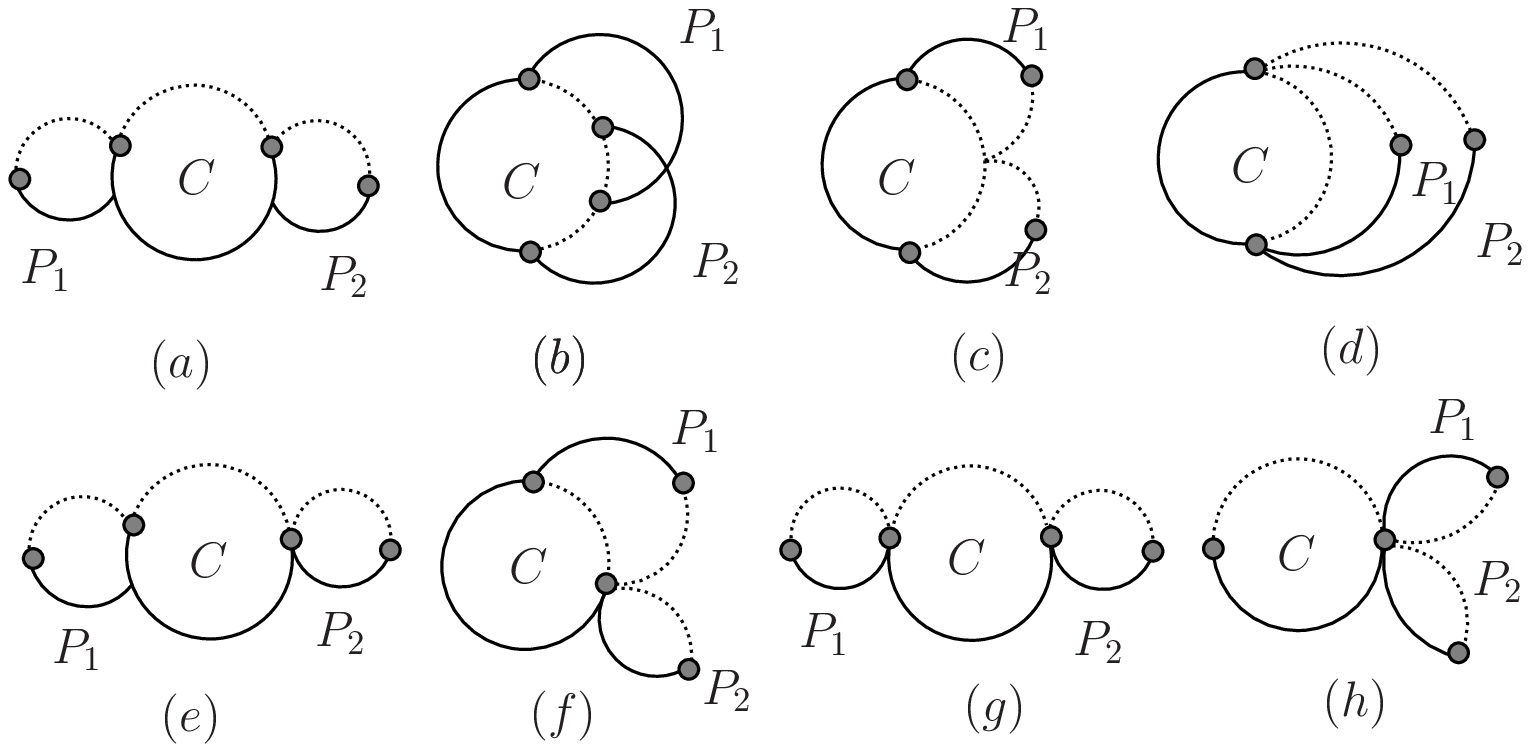}}\\[20pt]

Figure~3. Graphs for Proposition 2
\end{center}
\end{figure}

To prove the ``if" part, since every maximal bridgeless subgraph of
$G$  is a cycle $C$ or $C\cup P$, where $P$ is an ear of $C$, then
every edge incident to a maximal bridgeless subgraph of $G$ is a cut
edge of $G$. Similar to Proposition \ref{pro-1}, it is easy to check
that only one tree connects every four vertices in $G$, and so
$\overline{\kappa}_4(G)=1$.
\end{proof}

\begin{lem}\label{lem-4}
Let $G$ be a connected graph of order 5 and size 6. Then $G\in
\{B+K_1,D_1,D_2,D_3\}$ and $\overline{\kappa}_4(G)=1$.
\end{lem}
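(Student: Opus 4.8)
The plan is to split the argument into two independent parts: first classify all connected graphs of order $5$ and size $6$, showing that up to isomorphism they are exactly the graphs $B+K_1, D_1, D_2, D_3$ of Figure 1; then verify $\overline{\kappa}_4(G)=1$ for each of them by appealing to Proposition \ref{pro-2}. Since the classification is a finite enumeration and the verification is a case check against the ``at most one ear'' criterion, neither part needs the global induction used elsewhere in the paper.

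For the classification I would first observe that a connected graph on $5$ vertices has at least $4$ edges, so a graph of size $6$ has cyclomatic number $6-5+1=2$. The natural case split is on whether $G$ has a vertex of degree $1$. If $w$ is such a pendant vertex, then $G-w$ is a connected graph of order $4$ and size $5$; the only such graph is $K_4$ minus an edge, namely $B$, so $G$ arises from $B$ by attaching one new vertex via the adding operation, i.e. $G=B+K_1$. If instead $\delta(G)\geq 2$, then the degree sequence (summing to $2\cdot 6 = 12$) must be either $(4,2,2,2,2)$ or $(3,3,2,2,2)$, and I would check directly that the realizations are precisely $D_1,D_2,D_3$: the two triangles sharing a vertex, the graph $K_{2,3}$, and the theta-graph whose two branch vertices are joined by three internally disjoint paths of lengths $1,2,3$. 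A convenient sanity check is that there are six graphs of order $5$ and size $6$ up to isomorphism, exactly one of which, $K_4\cup K_1$, is disconnected.

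For the second part I would, for each listed graph, enumerate its cycles and confirm that every cycle carries at most one ear, so that Proposition \ref{pro-2} gives $\overline{\kappa}_4(G)=1$. Two simplifications keep this short. A pendant vertex has degree $1$ and therefore lies on no ear, which disposes of the $B+K_1$ cases quickly once the ears of $B$ itself are understood. Moreover, as remarked just before Proposition \ref{pro-2}, an ear of length $1$ (a chord) may be replaced by the complementary arc of its cycle; folding this reduction in from the start means that in each of $D_1,D_2,D_3$ exactly one extra path or chord is available to a given cycle, yielding a single ear.

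The main obstacle is making the classification genuinely exhaustive rather than merely plausible: I must ensure that no isomorphism type is overlooked in the $\delta(G)\geq 2$ case and that the degree-sequence analysis leaves no gaps, which is why I would pin down each realization concretely and cross-check against the total count of six. The subsequent ear-counting is mechanical, but it needs care precisely for those cycles that use a length-$1$ ear, so I would carry the ``replace a length-$1$ ear'' reduction through the bookkeeping from the outset to avoid double-counting.
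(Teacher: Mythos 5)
Your proposal is correct and follows essentially the same route as the paper: classify the connected graphs of order $5$ and size $6$ by a case split on minimum degree (pendant vertex versus $\delta\geq 2$), then verify $\overline{\kappa}_4(G)=1$ for each via the ``at most one ear'' criterion of Proposition \ref{pro-2}. The only cosmetic difference is that for the $\delta(G)\geq 2$ case you enumerate degree sequences, while the paper deletes a degree-$2$ vertex and classifies the resulting $4$-vertex, $4$-edge graphs; both are the same finite check.
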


\begin{proof}
We can easily get that $\delta(G)\leq 2$; otherwise $e(G)\geq
\frac{3n}{2}=\frac{15}{2}$. If $\delta(G)=1$, by deleting a vertex
of degree one, say $v$, we obtain a graph $G^{*}=K_4\setminus e$.
Observe that $G^{*}+K_1$ has no cycle with two ears. Thus by
Proposition \ref{pro-2} $\overline{\kappa}_4(G)=1$.

Suppose that $\delta(G)=2$, without loss of generality, let
$d(v)=2$. Then $G\setminus v$ is $C_4$ or $C_3+K_1$. Adding $v$
back, there are four graphs $D_1$, $D_2$, $D_3$ or $B+K_1$, and for
each of the graphs, $\overline{\kappa}_4(G)=1$.
\end{proof}

\begin{lem}\label{lem-3}
Let $G$ be a connected graph of order 5 and size at least 7. Then
$\overline{\kappa}_4(G)\geq2$.
\end{lem}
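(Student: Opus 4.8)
The plan is to argue by contradiction using the structural characterization in Proposition \ref{pro-2}. Suppose that $\overline{\kappa}_4(G)=1$ while $G$ is connected of order $5$ with $e(G)\geq 7$. By Proposition \ref{pro-2}, every cycle of $G$ has at most one ear, so, exactly as in the ``if'' part of that proposition, every maximal bridgeless subgraph of $G$ is either a single cycle $C$ or a cycle together with one ear $C\cup P$ (a theta graph), and every remaining edge is a bridge. The goal is then to show that such a graph on $5$ vertices can carry at most $6$ edges, which contradicts $e(G)\geq 7$.

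To carry this out I would first observe that distinct maximal bridgeless subgraphs are vertex-disjoint: if two of them shared a vertex, their union would again contain no bridge, violating maximality. Contracting each maximal bridgeless subgraph to a point then yields a tree, since all cycles are confined to the bridgeless pieces, so the number of bridges is one less than the number of pieces. A short computation with this decomposition gives the clean identity $e(G)=(n-1)+\sum_{B}\beta(B)$, where the sum runs over the maximal bridgeless subgraphs $B$ and $\beta(B)=1$ when $B$ is a cycle and $\beta(B)=2$ when $B$ is a theta graph (its cyclomatic number).

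It then remains only to bound $\sum_B \beta(B)$ when $n=5$. The inputs are that the pieces are vertex-disjoint, that a cycle uses at least $3$ vertices, and that a theta graph uses at least $4$. Hence a single theta graph already leaves room for no further bridgeless piece, forcing $\sum_B\beta(B)=2$, while in the theta-free case at most one cycle fits among $5$ vertices and $\sum_B\beta(B)\leq 1$. Either way $e(G)\leq 4+2=6$, the desired contradiction. The only delicate point, and the step I would write out most carefully, is the vertex-budget bookkeeping that rules out $\sum_B \beta(B)\geq 3$; everything else is immediate from Proposition \ref{pro-2}. As an alternative more in the spirit of Lemma \ref{lem-2}, one could instead pass to a connected spanning subgraph of size exactly $7$ via Observation \ref{obser-1}, note that such a graph is $K_5$ with three edges deleted (only a few isomorphism types occur), and exhibit in each a cycle with two ears; I prefer the edge-counting route since it sidesteps the case analysis.
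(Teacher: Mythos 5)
Your argument is correct, but it is genuinely different from the paper's. The paper first invokes Lemma \ref{lem-1} (the Nash--Williams/Tutte criterion) to dispose of size at least $8$, and then handles size exactly $7$ by a minimum-degree case analysis: $\delta(G)\leq 2$, and deleting a vertex of degree $1$ (resp.\ $2$) leaves $K_4$ (resp.\ $K_4\setminus e$), after which one either exhibits two edge-disjoint spanning trees of $K_4$ or finds a cycle with two ears and applies Proposition \ref{pro-2}. You instead prove the contrapositive by a global count: under $\overline{\kappa}_4(G)=1$ the maximal bridgeless pieces are pairwise vertex-disjoint cycles or theta graphs, the bridge-tree structure gives $e(G)=(n-1)+\sum_B\beta(B)$, and the vertex budget on $5$ vertices forces $\sum_B\beta(B)\leq 2$, hence $e(G)\leq 6$. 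Your route is self-contained (no appeal to Lemma \ref{lem-1}, whose spanning trees are in any case only edge-disjoint rather than internally disjoint when $S\neq V$), it recovers the sharp bound $6$ that matches the extremal graphs of Lemma \ref{lem-4}, and the same identity $e(G)=n-1+\sum_B\beta(B)\leq n-1+2\lfloor n/4\rfloor$ would in fact give the upper bound of Theorem \ref{thm-3} for general $n$ without induction; what it costs is that you must carefully justify the structural inputs (disjointness and maximality of the bridgeless pieces, and that each is a cycle or theta), which you correctly flag as the delicate bookkeeping step and which ultimately rests on the ear-addition claim preceding Proposition \ref{pro-2}. The paper's route is more pedestrian but needs no structure theory beyond what is already written out.
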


\begin{proof}
By Lemma \ref{lem-1}, we need to check the case that $G$ has order 5
and size exactly 7. First, similar to Lemma \ref{lem-4},
$\delta(G)\leq 2$. Suppose that $\delta(G)=1$, without loss of
generality, let $d(v)=1$. Then $|V(G\setminus v)|=4$ and
$e(G\setminus v)=6$, which implies that $G\setminus v$ is $K_4$.
Then there are two internally disjoint trees connecting the four
vertices of the clique $K_4$. It follows that
$\overline{\kappa}_4(G\setminus v)\geq2$, and hence
$\overline{\kappa}_4(G)\geq2$.

If $\delta(G)=2$, suppose that $v$ has degree 2, then $|V(G\setminus
v)|=4$ and $e(G\setminus v)=5$, giving that $G\setminus v$ is
$K_4\setminus e$. Adding $v$ again, the graph $G$ has a cycle with
two ears, and by Proposition \ref{pro-2},
$\overline{\kappa}_4(G)\geq2$.
\end{proof}

\begin{lem}\label{lem-6}
Let $G$ be a connected graph of order 6 and size 7. Then $G\in
\{B+\{K_1\}^2,F_1,F_2,\\F_3,F_4\}$ and $\overline{\kappa}_4(G)=1$.
\end{lem}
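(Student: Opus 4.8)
The statement closely parallels Lemma \ref{lem-4}, so my plan is to mimic that proof structure: classify all connected graphs of order $6$ and size $7$ by examining the minimum degree, show that the only possibilities are $B+\{K_1\}^2$, $F_1$, $F_2$, $F_3$, $F_4$, and then verify $\overline{\kappa}_4(G)=1$ for each via Proposition \ref{pro-2}. First I would bound the minimum degree. Since $\sum_v d(v)=2e(G)=14$ over $6$ vertices, the average degree is $14/6<3$, so $\delta(G)\le 2$; in fact a graph with $\delta(G)\ge 3$ on $6$ vertices would have size at least $\lceil 18/2\rceil=9>7$. This forces $\delta(G)\in\{1,2\}$ and lets me peel off a low-degree vertex and invoke the order-$5$ classification already established.

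The main work is the case analysis. In the case $\delta(G)=1$, let $d(v)=1$; then $G\setminus v$ is a connected graph of order $5$ and size $6$, so by Lemma \ref{lem-4} it lies in $\{B+K_1,D_1,D_2,D_3\}$. Reattaching $v$ by its single edge to each of these four graphs and checking which attachments avoid creating a cycle with two ears (Proposition \ref{pro-2}) should pin down the admissible graphs among the claimed list. In the case $\delta(G)=2$, let $d(v)=2$; then $G\setminus v$ has order $5$ and size $5$, so it is one of the order-$5$, size-$5$ graphs (a cycle or a smaller cycle with a pendant/appended structure), and I reattach $v$ by its two edges in all possible ways, discarding any configuration that produces a cycle with two ears. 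Here Observation \ref{obser-1} is the useful discard tool: any graph containing a subgraph with $\overline{\kappa}_4\ge 2$ is itself ruled out.

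I expect the case analysis to be the main obstacle, not any single deep idea but the sheer bookkeeping: I must enumerate every way to attach the deleted vertex back, and for the degree-$2$ case the number of reattachment choices is larger, so I have to be careful not to miss a graph or to double-count graphs up to isomorphism. The cleanest way to organize this is to argue that each surviving graph is obtained by the adding operation from smaller pieces ($B$, the $F_i$, and copies of $K_1$), matching it against the definitions of the $F_i$ in Figure $1$, so that membership in the claimed set becomes transparent. Once the five graphs are isolated, verifying $\overline{\kappa}_4(G)=1$ reduces to the routine observation (exactly as in Proposition \ref{pro-2}) that in each of them every maximal bridgeless subgraph is a cycle or a cycle with a single ear, whence every edge leaving such a subgraph is a cut edge and only one Steiner tree connects any four vertices.
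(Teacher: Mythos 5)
Your proposal follows essentially the same route as the paper: bound $\delta(G)\leq 2$ by the edge count, delete a minimum-degree vertex, invoke Lemma \ref{lem-4} when $\delta(G)=1$ and the enumeration of connected unicyclic graphs on $5$ vertices when $\delta(G)=2$, then reattach and verify via Proposition \ref{pro-2}. The paper's proof is exactly this case analysis, so your plan is correct and not a different approach.
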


\begin{proof}
Obviously, $\delta(G)\leq 2$. If $\delta(G)=1$, by deleting a vertex
of degree one we get the graphs in Lemma \ref{lem-4}. Adding $v$
again, it is easy to check that $\overline{\kappa}_4(G)=1$.

If $\delta(G)=2$, without loss of generality, let $d(v)=2$, then
$|V(G\setminus v)|=5$ and $e(G\setminus v)=5$. Then $G\setminus v$
is $C_5$ or $C_4+K_1$ or $K_3+\{K_1\}^2$. Adding $v$ again, the
graph $G$ belongs to $\{B+\{K_1\}^2,F_1,F_2,F_3,F_4\}$, and for each
of the graphs, it is easy to check that $\overline{\kappa}_4(G)=1$.
\end{proof}

\begin{lem}\label{lem-5}
Let $G$ be a connected graph of order 6 and size at least 8. Then
$\overline{\kappa}_4(G)\geq2$.
\end{lem}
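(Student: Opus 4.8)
The plan is to follow the reduction scheme used in Lemmas \ref{lem-2} and \ref{lem-3}: first cut the problem down to a single size, then dispose of it through a minimum-degree argument. First I would reduce to the case $e(G)=8$ exactly. Indeed, if $e(G)\geq 8$, I can repeatedly delete edges lying on cycles (each such edge is not a bridge, so connectivity is preserved) until a connected spanning subgraph $H$ of order $6$ and size exactly $8$ is reached; since $H\subseteq G$, Observation \ref{obser-1} reduces the claim for $G$ to the same claim for $H$. So assume henceforth $e(G)=8$. Counting degrees gives $\sum_{v}d(v)=16<18$, whence $\delta(G)\leq 2$.

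If $\delta(G)=1$, pick a vertex $v$ with $d(v)=1$. Then $G\setminus v$ is connected of order $5$ and size $7$, so Lemma \ref{lem-3} yields $\overline{\kappa}_4(G\setminus v)\geq 2$, and Observation \ref{obser-1} finishes this case. If $\delta(G)=2$, pick $v$ with $d(v)=2$; then $G\setminus v$ has order $5$ and size $6$, and I split according to whether it is connected. If $G\setminus v$ is disconnected, its two components (at most two attach to a degree-$2$ vertex) have orders summing to $5$ and sizes summing to $6$; the only possibility is a $K_1$ together with a component of order $4$ and size $6$, namely a $K_4$. Since $K_4$ carries two edge-disjoint spanning trees, taking $S$ to be its four vertices gives two internally disjoint $S$-Steiner trees, so $\overline{\kappa}_4(K_4)\geq 2$, and Observation \ref{obser-1} again applies.

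The remaining case, $d(v)=2$ with $G\setminus v$ connected, is where the real work lies. Here Lemma \ref{lem-4} pins down $G\setminus v\in\{B+K_1,D_1,D_2,D_3\}$, each of which already contains a cycle carrying an ear. Adding $v$ back with its two edges creates, together with a path of $G\setminus v$ joining the two neighbours of $v$, a further cycle, and the plan is to exhibit in every resulting graph a single cycle possessing two ears; Proposition \ref{pro-2} then gives $\overline{\kappa}_4(G)\geq 2$ at once. I expect this step to be the main obstacle: it is a finite but genuinely case-laden verification, since one must run over the four possible graphs $G\setminus v$ and, for each, over the essentially different ways of attaching the two edges of $v$ to its five vertices, in each instance locating the second ear (or, where cleaner, simply displaying two edge-disjoint Steiner trees on a well-chosen $4$-set directly). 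The symmetries of $B+K_1$ and of the $D_i$ should cut the number of subcases down substantially, so I would organise the check by the location of the two neighbours of $v$ relative to the existing ear (both on the ear, both off it, or split) so as to keep the bookkeeping manageable.
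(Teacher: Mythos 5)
Your proposal follows the paper's proof essentially verbatim: bound $\delta(G)\leq 2$, delete a minimum-degree vertex $v$, and dispatch the cases via Lemma \ref{lem-3} (when $e(G\setminus v)\geq 7$) and Lemma \ref{lem-4} plus Proposition \ref{pro-2} (when $G\setminus v$ has size $6$, where adding $v$ back creates a cycle with two ears). You are in fact slightly more careful than the paper in two spots --- reducing first to $e(G)=8$ so that the degree count genuinely forces $\delta(G)\leq 2$, and treating the subcase where $G\setminus v$ is disconnected (forcing a $K_4$ component) --- while the finite verification you defer at the end is asserted with the same brevity in the paper's own proof.
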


\begin{proof}
We can easily get that $\delta(G)\leq 2$; otherwise $e(G)\geq
\frac{3n}{2}=9$. If $\delta(G)=1$, we delete a vertex of degree one
to get a graph of order 5 and size at least 7. Then by Lemma
\ref{lem-3}, it follows that $\overline{\kappa}_4(G)\geq2$.

If $\delta(G)=2$, without loss of generality, let $d(v)=2$, then
$|V(G\setminus v)|=5$ and $e(G\setminus v)\geq6$. If $e(G\setminus
v)\geq7$, by Lemma \ref{lem-3}, $\overline{\kappa}_4(G\setminus
v)\geq2$, and then $\overline{\kappa}_4(G)\geq2$. So we remain to
check the case $|V(G\setminus v)|=5$ and $e(G\setminus v)=6$, which
implies that $G\setminus v$ is one of the graphs in Lemma
\ref{lem-4}. Adding $v$ again, there is a cycle with two ears, and
by Proposition \ref{pro-2}, $\overline{\kappa}_4(G)\geq2$.
\end{proof}

\begin{thm}\label{thm-3}
Let $n=4r+q$, where $0\leq q \leq 3$, and let $G$ be a connected
graph of order $n$ such that $\overline{\kappa}_4(G)=1$. Then
$$
e(G)\leq \left\{
\begin{array}{cc}
\frac{3n-2}{2}&if~q=0,\\
\frac{3n-3}{2}&if~q=1,\\
\frac{3n-4}{2}&if~q=2,\\
\frac{3n-3}{2}&if~q=3.
\end{array}
\right.
$$
with equality if and only if $G\in \mathcal{H}_n^q$.
\end{thm}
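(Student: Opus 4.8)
The plan is to establish the bound and the equality characterization together by strong induction on $n$, after first recasting the four cases uniformly. Writing $c_0=2$, $c_1=3$, $c_2=4$, $c_3=3$, all four inequalities read $e(G)\le \frac{3n-c_q}{2}$ with $q\equiv n\pmod 4$. I will also adopt the degenerate conventions $\mathcal{H}_1^1=K_1$, $\mathcal{H}_2^2=K_2$, $\mathcal{H}_3^3=A=C_3$ (for which $\frac{3n-c_q}{2}$ returns $0,1,3$), so that the induction may split off arbitrarily small pieces. The base cases $n\le 7$ are exactly what Lemmas \ref{lem-4}--\ref{lem-5} supply, together with the trivial checks for $n\le 4$ and $n=7$: they pin down the graphs meeting the bound for orders $5,6$ and identify the extremal bridgeless blocks.

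First I would dispose of the bridgeless case. If $G$ has no cut edge it is $2$-edge-connected, and by the Claim preceding Proposition \ref{pro-2} its ear decomposition contains at most one ear beyond the initial cycle, since a second ear would create a cycle with two ears, contradicting Proposition \ref{pro-2}. Hence $G$ is a cycle or a cycle with exactly one ear, so $e(G)\le n+1$. As $\frac{3n-c_q}{2}-(n+1)=\frac{n-(c_q+2)}{2}\ge\frac{n-6}{2}$, this is strictly below the bound for $n\ge 7$ and can meet it only for $n\le 6$; the blocks that then occur are precisely $A,B,D_i,F_j$ of orders $3,4,5,6$, which is where the lemmas are invoked.

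The heart of the argument is the cut-edge step. Picking a cut edge $e$ and writing $G=G_1+G_2$ with orders $n_i$ and residues $q_i$, we have $e(G)=e(G_1)+e(G_2)+1$; since a bridge lies on no cycle, Proposition \ref{pro-2} is inherited by each $G_i$, so $\overline{\kappa}_4(G_i)=1$ and the induction (with the degenerate conventions for pieces of order $\le 3$) applies to give $e(G)\le\frac{3n-(c_{q_1}+c_{q_2}-2)}{2}$. Comparing this with $\frac{3n-c_q}{2}$, $q\equiv q_1+q_2$, reduces the entire estimate to the numerical fact $c_{(q_1+q_2)\bmod 4}\le c_{q_1}+c_{q_2}-2$, which I would verify by running through the ten unordered residue pairs; equality holds exactly for $(q_1,q_2)\in\{(0,0),(0,1),(0,2),(0,3),(1,1),(3,3)\}$ and is strict otherwise. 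This simultaneously proves the bound and shows that an extremal $G$ must split, at every cut edge, into two extremal pieces joined along an equality pair.

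For the characterization I would iterate the split until $G$ is a tree of indecomposable bridgeless blocks together with $K_1$'s; by the bridgeless analysis these blocks are exactly $A,B,D_i,F_j$. Assigning each block the weight $d_\rho:=c_\rho-2$ (so $d(B)=0$, $d(K_1)=d(D_i)=d(A)=1$, $d(F_j)=2$), a telescoping of the additive edge count shows that extremality is equivalent to $\sum_s d_{\rho_s}=d_q$; since $d_q\le 2$, the total weight of the non-$B$ blocks is at most $2$, subject to a residue-matching condition $\sum_s\rho_s\equiv q$. Reading off all weight-$\le 2$ configurations yields precisely the lists $\mathcal{H}_n^q$: the constraints $i_0\le 2$, $i_2+i_3+i_4\le 2$, $i_5+\cdots+i_8\le 1$, $i_9\le 2$ and the incompatibility of $D_i$ with $F_j$ all appear as ``weight $\le 2$'' statements, while the residue condition discards spurious combinations such as $A+D_i$ (weight $2$ but residue-sum $0$, whence $d_q=0$). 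The converse is immediate: the edge count is additive over blocks, and joining blocks by bridges keeps every cycle with at most one ear, so $\overline{\kappa}_4(G)=1$ by Proposition \ref{pro-2}. The main obstacle is the bookkeeping in this last step---confirming that the weight and residue conditions reproduce the $\mathcal{H}_n^q$ lists exactly and that no extremal combination is overlooked is where the case analysis becomes genuinely delicate.
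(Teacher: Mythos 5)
Your proposal is correct and follows essentially the same route as the paper: induction on $n$ with base cases supplied by Lemmas \ref{lem-4}--\ref{lem-5}, the bridgeless case settled via Proposition \ref{pro-2} (cycle or cycle with one ear, hence at most $n+1$ edges), and the main step splitting at a cut edge into $G_1+G_2$ and running through the residue pairs $(q_1,q_2)$ modulo $4$. Your uniform $c_q$ notation, the explicit degenerate conventions for pieces of order at most $3$, and the weight-based bookkeeping for the equality characterization are just a cleaner packaging of the paper's case-by-case analysis, with the same equality pairs $(0,0),(0,1),(0,2),(0,3),(1,1),(3,3)$.
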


\begin{proof}
We apply induction on $n$. For $n=4$, it is easy to see that
$e(G)\leq5$ and if $e(G)=5$, and then $G=B\in\mathcal{H}_n^0$. For
$n=5$, if $G$ is a connected graph of order 5 and size at least 7,
then $\overline{\kappa}_4(G)\geq2$ by Lemma \ref{lem-3}. In other
cases, either $e(G)\leq5$ or $G\in \mathcal{H}_n^1$ by Lemma
\ref{lem-4}. For $n=6$, if $G$ is a connected graph of order 6 and
size at least 8, then $\overline{\kappa}_4(G)\geq2$ by Lemma
\ref{lem-5}. In other cases, either $e(G)\leq6$ or $G\in
\mathcal{H}_n^2$ by Lemma \ref{lem-6}. Let $n\geq7$, and suppose
that the assertion holds for graphs of order less than $n$. We show
that the assertion holds for graphs of order $n$. We divide into two
cases according to whether $G$ has cut edges.

If $G$ has no cut edge, then $G$ is bridgeless, and combining with
Proposition \ref{pro-2}, $G$ is a cycle or a cycle with an ear. If
$G$ is a cycle, then $e(G)=n<\frac{3n-4}{2}$, since $n\geq7$. If $G$
is a cycle with an ear, then $e(G)=n+1<\frac{3n-4}{2}$, since
$n\geq7$.

Suppose that $G$ has cut edges. Without loss of generality, let $e$
be a cut edge. Let $G_1$ and $G_2$ be two connected components of
$G\setminus e$. Set $V(G_1)=n_1$, $V(G_2)=n_2$ where $n_1+n_2=n$.
Clearly, $e(G)=e(G_1)+e(G_2)+1$. Furthermore, set $n_1\equiv q_1 \
(mod~4)$, $n_2\equiv q_2 \ (mod~4)$ where $q_1,q_2\in\{0,1,2,3\}$.

If $q_1=0$, $q_2\in\{0,1,2\}$ or $q_1=1$, $q_2=1$, by induction
hypothesis, $e(G_1)\leq \frac{3n_1-2-q_1}{2}$, $e(G_2)\leq
\frac{3n_2-2-q_2}{2}$. If $e(G_1)<\frac{3n_1-2-q_1}{2}$ or
$e(G_2)<\frac{3n_2-2-q_2}{2}$, then $e(G)<\frac{3n-2-q_1-q_2}{2}$.
If $e(G_1)=\frac{3n_1-2-q_1}{2}$ and $e(G_2)= \frac{3n_2-2-q_2}{2}$,
then by induction hypothesis, $G_1\in \mathcal{H}_{n_1}^{q_1}$,
$G_2\in \mathcal{H}_{n_2}^{q_2}$, and it follows that $G=G_1+G_2\in
\mathcal{H}_n^{q_1+q_2}$ and $e(G)=\frac{3n-2-q_1-q_2}{2}$.

If $q_1=0$, $q_2=3$, by induction hypothesis, $e(G_1)\leq
\frac{3n_1-2}{2}$, $e(G_2)\leq \frac{3n_2-3}{2}$. If
$e(G_1)<\frac{3n_1-2}{2}$ or $e(G_2)<\frac{3n_2-3}{2}$, then
$e(G)<\frac{3n-3}{2}$. If $e(G_1)=\frac{3n_1-2}{2}$ and $e(G_2)=
\frac{3n_2-3}{2}$, then by induction hypothesis, $G_1\in
\mathcal{H}_{n_1}^0$, $G_2\in \mathcal{H}_{n_2}^3$, and it follows
that $G=G_1+G_2\in \mathcal{H}_n^3$ and $e(G)=\frac{3n-3}{2}$.

If $q_1=1$, $q_2=2$, then $e(G_1)\leq \frac{3n_1-3}{2}$ and
$e(G_2)\leq \frac{3n_2-4}{2}$, and thus $e(G)\leq
\frac{3n-5}{2}<\frac{3n-3}{2}$.

If $q_1=1$, $q_2=3$, then $e(G_1)\leq \frac{3n_1-3}{2}$, $e(G_2)\leq
\frac{3n_2-3}{2}$, and so $e(G)\leq \frac{3n-4}{2}<\frac{3n-2}{2}$.

If $q_1=2$, $q_2=2$, then $e(G_1)\leq \frac{3n_1-4}{2}$, $e(G_2)\leq
\frac{3n_2-4}{2}$, and it follows that $e(G)\leq
\frac{3n-6}{2}<\frac{3n-3}{2}$.

If $q_1=2$, $q_2=3$, then $e(G_1)\leq \frac{3n_1-4}{2}$, $e(G_2)\leq
\frac{3n_2-3}{2}$, and so $e(G)\leq \frac{3n-5}{2}<\frac{3n-3}{2}$.

If $q_1=3$, $q_2=3$, by induction hypothesis, $e(G_1)\leq
\frac{3n_1-3}{2}$, $e(G_2)\leq \frac{3n_2-3}{2}$. If
$e(G_1)<\frac{3n_1-3}{2}$ or $e(G_2)<\frac{3n_2-3}{2}$, then
$e(G)<\frac{3n-4}{2}$. If $e(G_1)=\frac{3n_1-3}{2}$ and $e(G_2)=
\frac{3n_2-3}{2}$, then by induction hypothesis, $G_1\in
\mathcal{H}_{n_1}^3$, $G_2\in \mathcal{H}_{n_2}^3$, and it follows
that $G=G_1+G_2\in \mathcal{H}_n^2$ and $e(G)=\frac{3n-4}{2}$.
\end{proof}

So, we get the following result for $k=4$.
\begin{thm}\label{cor-2}
$$
f(n;\overline{\kappa}_4=1)= \left\{
\begin{array}{cc}
\frac{3n-2}{2}&if~q=0,\\
\frac{3n-3}{2}&if~q=1,\\
\frac{3n-4}{2}&if~q=2,\\
\frac{3n-3}{2}&if~q=3,
\end{array}
\right.
$$
where $n=4r+q$ and $0\leq q \leq 3$.
\end{thm}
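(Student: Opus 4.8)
The plan is to deduce Theorem~\ref{cor-2} directly from Theorem~\ref{thm-3}; the only genuine work lies in passing from the connected graphs governed by Theorem~\ref{thm-3} to the full family of $n$-vertex graphs with $\overline{\kappa}_4=1$ over which $f(n;\overline{\kappa}_4=1)$ is defined. Accordingly I would split the claim into the attainment bound $f(n;\overline{\kappa}_4=1)\ge B_q$ and the upper bound $f(n;\overline{\kappa}_4=1)\le B_q$, where $B_q$ abbreviates the value listed for residue $q$ in Theorem~\ref{thm-3}.

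For attainment I would invoke the equality clause of Theorem~\ref{thm-3}: every member of $\mathcal{H}_n^q$ is a connected graph of order $n$ with $\overline{\kappa}_4=1$ and size exactly $B_q$. Since any such graph is admissible in the maximum defining $f$, this forces $f(n;\overline{\kappa}_4=1)\ge B_q$ for each $q$. No computation is needed beyond naming one representative, which the class definitions in Section~2 already supply (for instance $\{B\}^r$ when $q=0$, or $\{B\}^r+A$ when $q=3$).

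For the upper bound I must address that the extremal graph need not, a priori, be connected, whereas Theorem~\ref{thm-3} bounds only connected graphs. I would argue that the maximum is always attained by a connected graph. Let $G$ be any $n$-vertex graph with $\overline{\kappa}_4(G)=1$. Because no Steiner tree can join vertices lying in distinct components, a $4$-set realising $\overline{\kappa}_4$ sits inside a single component, so every component of $G$ of order at least $4$ has $\overline{\kappa}_4=1$ and hence, by Proposition~\ref{pro-2}, all of its cycles carry at most one ear; the smaller components contain only triangles or no cycle at all, which again carry no ear. Now join the components by cut edges through the adding operation of Section~2 to form a connected graph $G'$ with $e(G')>e(G)$. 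A cut edge lies on no cycle, so $G'$ has exactly the cycles of $G$ with unchanged ears, whence every cycle of $G'$ still has at most one ear and Proposition~\ref{pro-2} gives $\overline{\kappa}_4(G')=1$. Therefore $e(G)<e(G')\le B_q$ by Theorem~\ref{thm-3}, so no graph exceeds $B_q$.

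The main obstacle is precisely this reduction to the connected case, since one must be certain that re-connecting the components cannot push $\overline{\kappa}_4$ above $1$. The decisive ingredient is the structural criterion of Proposition~\ref{pro-2} together with the fact that a bridge belongs to no cycle, so the structure of cycles and their ears survives verbatim under the adding operation. Once this is granted, the two bounds coincide and yield $f(n;\overline{\kappa}_4=1)=B_q$ for every $q\in\{0,1,2,3\}$, matching the four values of Theorem~\ref{thm-3}.
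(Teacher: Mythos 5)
Your proposal is correct and follows essentially the same route as the paper, which simply states Theorem~\ref{cor-2} as an immediate consequence of Theorem~\ref{thm-3} (the classes $\mathcal{H}_n^q$ supply the extremal examples, and Theorem~\ref{thm-3} supplies the matching upper bound). The only addition is your careful reduction from arbitrary to connected graphs via Proposition~\ref{pro-2} and the adding operation, a point the paper leaves implicit; that argument is sound.
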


\section{The case $k=n$}

Let us turn now to the case $k=n$. Let $n\geq 5$, since $k=3$ and
$k=4$ have been considered before. Observe that in this case the
edge disjoint trees are the same as the internally disjoint trees.

\begin{thm}\label{thm-4}
Let $G$ be a connected graph of order $n$ such that
$\overline{\kappa}_n(G)=1$ where $n\geq 5$. Then $e(G)\leq{n-1
\choose 2}+1$, and with equality if and only if $G\in
\mathcal{K}_n$.
\end{thm}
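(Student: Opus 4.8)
The plan is first to unwind what $\overline{\kappa}_n(G)=1$ means. When $S=V(G)$, an $S$-Steiner tree is just a spanning tree of $G$, and two spanning trees are internally disjoint exactly when they are edge-disjoint, since the requirement $V(T_1)\cap V(T_2)=S$ is automatic for spanning trees. Thus $\overline{\kappa}_n(G)=1$ asserts precisely that the connected graph $G$ has a spanning tree but no two edge-disjoint spanning trees, which hands the problem directly to Theorem~\ref{thm-1}. Applying that theorem with $k=2$, the absence of two edge-disjoint spanning trees yields a partition $\mathcal{P}=\{V_1,\dots,V_p\}$ of $V(G)$ with at most $2(p-1)-1=2p-3$ cross-edges; since the trivial partition $p=1$ can never witness a deficiency, we have $p\ge 2$. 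Writing $n_i=|V_i|$ and bounding the edges inside each part by ${n_i \choose 2}$ gives
$$e(G)\le \sum_{i=1}^{p}{n_i \choose 2}+(2p-3).$$

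Next I would optimize this bound. Exactly as in the proof of Lemma~\ref{lem-1}, for fixed $p$ the convex sum $\sum_{i=1}^{p}{n_i \choose 2}$ is largest when one part has size $n-p+1$ and the remaining $p-1$ parts are singletons, so $e(G)\le g(p):={n-p+1 \choose 2}+2p-3$. The key computation is $g(p)-g(p+1)=(n-p)-2=n-p-2$, which shows $g$ strictly decreases on $2\le p\le n-2$, is flat from $p=n-2$ to $p=n-1$, and rises from $p=n-1$ to $p=n$; hence the only candidate maxima are $g(2)={n-1 \choose 2}+1$ and $g(n)=2n-3$. Since $g(2)-g(n)=\frac{1}{2}(n-2)(n-5)$, for $n\ge 6$ the maximum is uniquely $g(2)={n-1 \choose 2}+1$, which is the claimed bound, while $n=5$ produces the sole tie $g(2)=g(5)=7$.

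For the equality case with $n\ge 6$ I would trace back through both inequalities. Attaining $g(2)$ forces $p=2$ (the unique maximizer), forces the part sizes to be $(n-1,1)$ (the unique way to reach ${n-1 \choose 2}$ with two parts), forces the large part to induce $K_{n-1}$, and forces exactly $2p-3=1$ cross-edge; this is precisely $K_{n-1}+K_1=\mathcal{K}_n$. The converse is immediate, since the single joining edge of $K_{n-1}+K_1$ is a bridge, so $G$ cannot contain two edge-disjoint spanning trees and $\overline{\kappa}_n(G)=1$. The case $n=5$ must be settled by a direct count rather than read off the partition: any connected graph on $5$ vertices with $7$ edges has fewer than $2(n-1)=8$ edges and so cannot hold two edge-disjoint spanning trees, giving $\overline{\kappa}_5=1$, whereas any connected graph on $5$ vertices with at least $8$ edges is $K_5$ with at most $2=n-3$ deleted edges and hence contains two edge-disjoint spanning trees by Lemma~\ref{lem-1}; this is exactly the description $\mathcal{K}_5=\{G:|V(G)|=5,\,e(G)=7\}$.

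I expect the main obstacle to be the optimization over $p$ together with the boundary behaviour at $n=5$. One must notice that $g$ has a second local maximum at $p=n$ whose value $2n-3$ coincides with $g(2)$ exactly when $n=5$; this is why the extremal family genuinely enlarges there, and why the $n=5$ characterization cannot be extracted from the partition bound but needs the separate counting argument via Lemma~\ref{lem-1}.
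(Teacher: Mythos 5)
Your proof is correct, but it takes a genuinely different route from the paper's, most visibly in the characterization of the extremal graphs. The paper obtains the upper bound in one line from Lemma \ref{lem-1} (a graph with no two edge-disjoint spanning trees must miss at least $n-2$ edges of $K_n$, so $e(G)\le {n \choose 2}-(n-2)$), and then proves uniqueness by a separate structural argument: it rules out $2\le\delta(H)\le n-3$ by deleting a minimum-degree vertex $v$, applying Lemma \ref{lem-1} to $H\setminus v$ (which misses only $d(v)-1\le n-4$ edges of $K_{n-1}$), and extending the two resulting spanning trees through two distinct edges at $v$; the case $\delta(H)\ge n-2$ is excluded by Lemma \ref{lem-1} directly, leaving $\delta(H)=1$ and hence $H=K_{n-1}+K_1$. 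You instead return to Theorem \ref{thm-1}: a witnessing partition with at most $2p-3$ cross-edges gives $e(G)\le {n-p+1 \choose 2}+2p-3=g(p)$, and your optimization over $p$ (with the correct observation that $g$ has a second local maximum at $p=n$ tying $g(2)$ exactly when $n=5$) yields both the bound and, by tracing tightness, the extremal graph: for $n\ge 6$ equality forces $p=2$, part sizes $(n-1,1)$, a complete large part and a single cross-edge. Your route is more self-contained, derives bound and uniqueness in one sweep, and explains structurally why $n=5$ is exceptional; the paper's is shorter on the bound and its deletion trick is reusable elsewhere, at the cost of the extra degree case analysis. Your treatment of $n=5$ by direct edge-counting coincides with the paper's, and both arguments ultimately rest on the Nash-Williams--Tutte theorem.
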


\begin{proof}
Let $G=K_5\setminus M$, where $M$ is an edge set. On one hand, to
make $\overline{\kappa}_5(G)=1$, by Lemma \ref{lem-1} $M$ should
contain at least 3 edges, and then $e(G)\leq 7$. On the other hand,
to form two edge-disjoint spanning trees, $G$ should contain at
least 8 edges, since each tree consists of at least 4 edges. Thus a
graph with order 5 and size 7 belongs to $\mathcal{K}_5$. It
suffices to verify the case $n\geq 6$. By Lemma \ref{lem-1} again,
to make $\overline{\kappa}_n(G)=1$, $e(G)\leq {n \choose
2}-(n-2)={n-1 \choose 2}+1$.

Now we show that $\mathcal{K}_n$ contains only one graph
$K_{n-1}+K_1$. Suppose $H$ is a graph with order $n$, size ${n-1
\choose 2}+1$ and $\overline{\kappa}_n(H)=1$ but different from
$K_{n-1}+K_1$.

We claim that $2\leq \delta(H) \leq n-3$. Otherwise, if
$\delta(H)=1$, then $H=K_{n-1}+K_1$. If $\delta(H) \geq n-2$, then
$e(H)\geq \frac{(n-2)n}{2}$, $H$ is obtained from $K_n$ by deleting
at most $\frac{n}{2}$ edges. Since $n\geq 6$, then $\frac{n}{2} \leq
n-3$. By Lemma \ref{lem-1}, $H$ has two edge-disjoint spanning
trees, a contradiction.

Let $v$ be a vertex of $H$ with degree equal to $\delta(H)$, and let
$H^{*}=H\setminus v$. Since there are $n-1-d(v)$ vertices not
adjacent to $v$ in $H$ and $H$ is obtained from $K_n$ by deleting
$n-2$ edges, $H^{*}$ is obtained from $K_{n-1}$ by deleting
$n-2-(n-1-d(v))=d(v)-1\leq (n-1)-3$ edges. By Lemma \ref{lem-1},
$H^{*}$ has two edge-disjoint spanning trees $T_1^{*}$ and
$T_2^{*}$. By adding an edge incident with $v$ to $T_1^{*}$ and
$T_2^{*}$ respectively, we will obtain two edge-disjoint spanning
trees of $H$, a contradiction. Thus $\mathcal{K}_n$ contains only
one graph $K_{n-1}+K_1$.
\end{proof}

So, we get the following result for $k=n$.
\begin{thm}\label{cor-3}
$f(n;\overline{\kappa}_n=1)={n-1 \choose 2}+1$ where $n\geq 5$.
\end{thm}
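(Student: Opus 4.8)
The plan is to read off the value of $f(n;\overline{\kappa}_n=1)$ directly from Theorem \ref{thm-4}, which already contains both halves of the needed argument. Recall that $f(n;\overline{\kappa}_n=1)=\max\{e(G):|V(G)|=n,\ \overline{\kappa}_n(G)=1\}$, and that $\overline{\kappa}_n(G)=1$ forces $G$ to be connected, since there must be at least one spanning tree connecting all $n$ vertices. Thus the maximum ranges exactly over the connected graphs covered by Theorem \ref{thm-4}.

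For the upper bound, Theorem \ref{thm-4} asserts that every connected graph $G$ of order $n\geq5$ with $\overline{\kappa}_n(G)=1$ satisfies $e(G)\leq{n-1\choose2}+1$. Hence $f(n;\overline{\kappa}_n=1)\leq{n-1\choose2}+1$ immediately. For the matching lower bound I would exhibit a single extremal graph from $\mathcal{K}_n$. When $n\geq6$ the graph $K_{n-1}+K_1$ is connected, has order $n$ and size ${n-1\choose2}+1$, and by the ``if'' direction of Theorem \ref{thm-4} satisfies $\overline{\kappa}_n(K_{n-1}+K_1)=1$; for $n=5$ any member of $\mathcal{K}_5$ (a graph of order $5$ and size $7$) serves the same purpose. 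Either way the bound is attained, so $f(n;\overline{\kappa}_n=1)={n-1\choose2}+1$.

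There is essentially no obstacle left, since Theorem \ref{thm-4} does all the work; the only point worth spelling out is why the extremal graph genuinely has $\overline{\kappa}_n=1$ rather than $\geq2$. This is transparent for $K_{n-1}+K_1$: the pendant vertex is joined to the clique by a single cut edge, and, as the text observes, for $k=n$ internally disjoint Steiner trees coincide with edge-disjoint spanning trees. Any such spanning tree must use that unique cut edge, so two edge-disjoint spanning trees cannot coexist, giving $\overline{\kappa}_n=1$. This confirms that $\mathcal{K}_n$ is nonempty and that its value ${n-1\choose2}+1$ is realized, completing the determination of $f(n;\overline{\kappa}_n=1)$.
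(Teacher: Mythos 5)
Your proposal is correct and follows essentially the same route as the paper, which states Theorem \ref{cor-3} as an immediate consequence of Theorem \ref{thm-4} without further argument. Your explicit verification that $K_{n-1}+K_1$ (and, for $n=5$, a member of $\mathcal{K}_5$) actually satisfies $\overline{\kappa}_n=1$ via the cut-edge/edge-count argument is exactly the point the paper leaves implicit inside its proof of Theorem \ref{thm-4}, so nothing new is needed.
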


\noindent \textbf{Remark}: Let $G$ be a connected graph. For $k=3$
and $k=4$, we get the necessary and sufficient conditions for
$\overline{\kappa}_k(G)=1$ by means of the number of ears of cycles.
Naturally, one might think that this method can always be applied
for $k=5$, i.e., every cycle in $G$ has at most two ears, but
unfortunately we found a counterexample: Let $G$ be a graph which
contains a cycle with three independent closed ears. Set
$C=u_1u_2u_3$, $P_1=u_1v_1w_1u_1$, $P_2=u_2v_2w_2u_2$, and
$P_3=u_3v_3w_3u_3$. Then, $\overline{\kappa}_5(G)=1$. In fact, let
$S$ be the set of chosen five vertices. Obviously, for each $i$, if
$v_i$ and $w_i$ are in $S$, then $\overline{\kappa}_5(S)=1$. So,
only one vertex in $P_i\setminus u_i$ can be chosen. Suppose that
$v_1$, $v_2$, $v_3$ have been chosen. By symmetry, $u_1$, $u_2$ are
chosen. It is easy to check that there is only one tree connecting
$\{u_1, u_2, v_1,v_2,v_3\}$. The remaining case is that all $u_1$,
$u_2$ and $u_3$ are chosen. Then, no matter which are the another
two vertices, only one tree can be found.

For general $k$ with $5\leq k \leq n-1$, we can only get the
following lower bound of $f(n;\overline{\kappa}_k=1)$. The exact
value is not easy to obtained.

\begin{thm}\label{thm-5}
$$
f(n;\overline{\kappa}_k=1)\geq
\left\{
         \begin{array}{ll}
           r{k-1 \choose 2}+r-1, & \hbox{if~$q=0$;} \\
           r{k-1 \choose 2}+{q \choose 2}+r, & \hbox{if~$1\leq q\leq k-2$.}
         \end{array}
       \right.
$$
for $n=r(k-1)+q$, $0\leq q\leq k-2$.
\end{thm}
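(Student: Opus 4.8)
Since the statement is a lower bound on the maximum size, the plan is to exhibit a single connected graph $G$ on $n$ vertices with $\overline{\kappa}_k(G)=1$ whose size equals the right-hand side; the maximum can then only be at least this large. The natural construction, in the spirit of the classes built in Sections 3 and 4 via the adding operation, is to take $r$ copies of the complete graph $K_{k-1}$ and, when $1\le q\le k-2$, one additional copy of $K_q$, and then to join all of these blocks into a single connected graph by bridges (the adding operation). Because each adding operation glues two connected pieces along exactly one new edge, the resulting $G$ is connected and every inserted edge is a cut edge of $G$.

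First I would count the edges. Each $K_{k-1}$ contributes ${k-1 \choose 2}$ edges and the optional $K_q$ contributes ${q \choose 2}$ edges. Connecting $m$ blocks by the adding operation uses exactly $m-1$ bridges. For $q=0$ there are $r$ blocks, hence $r-1$ bridges, giving $r{k-1 \choose 2}+r-1$; for $1\le q\le k-2$ there are $r+1$ blocks, hence $r$ bridges, giving $r{k-1 \choose 2}+{q \choose 2}+r$. These are exactly the two claimed values, so the size is as required, and it remains only to check the connectivity condition.

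The substantive step is to verify $\overline{\kappa}_k(G)=1$, i.e. that every $k$-subset $S$ of $V(G)$ admits only one internally disjoint Steiner tree. The key observation is that every block has at most $k-1$ vertices (the $K_{k-1}$'s have $k-1$, and $q\le k-2$), so $|S|=k$ forces $S$ to meet at least two distinct blocks. Passing to the block-tree of $G$, the bridges lying on the path between any two blocks that contain vertices of $S$ are cut edges separating part of $S$ from the rest, so every Steiner tree connecting $S$ must contain each such bridge. Consequently any two Steiner trees for $S$ share at least one bridge edge and cannot be edge-disjoint, let alone internally disjoint. Hence $\kappa(S)=1$ for every such $S$, and since $G$ is connected we always have $\kappa(S)\ge 1$, so $\overline{\kappa}_k(G)=1$.

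The main obstacle is exactly this last verification, and the point to handle with care is that the blocks are complete graphs — individually highly connected — yet this internal richness never yields a second Steiner tree for a $k$-set. The resolution is that the bottleneck is always a forced shared bridge: no matter how two candidate trees are routed inside the blocks, they are compelled to reuse at least one cut edge, so edge-disjointness fails. This also explains the shape of the bound: filling each block up to a complete graph maximizes the internal edges, while the constraint that every block has at most $k-1$ vertices is precisely what guarantees $\overline{\kappa}_k=1$, and balancing these against the one-edge-per-bridge cost produces the stated expression. (As the remark indicates, matching this with an upper bound for general $k$ is much harder, so only the lower bound is asserted here.)
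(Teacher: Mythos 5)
Your proposal is correct and follows exactly the paper's construction: $r$ copies of $K_{k-1}$ (plus one $K_q$ when $1\leq q\leq k-2$) joined into a connected graph by bridges via the adding operation, with the identical edge count. The only difference is that you spell out the verification of $\overline{\kappa}_k(G)=1$ (every block has at most $k-1$ vertices, so any $k$-set meets two blocks and forces a shared cut edge in every Steiner tree), which the paper dismisses as ``easy to verify''; your argument for that step is sound.
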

\begin{proof}
If $q=0$, let $G=\{K_{k-1}\}^r$, then $e(G)=r{k-1 \choose 2}+r-1$.
If $1\leq q\leq k-2$, let $G=\{K_{k-1}\}^r+K_q$, and then
$e(G)=r{k-1 \choose 2}+{q \choose 2}+r$. In every case, it is easy
to verify that $\overline{\kappa}_k(G)=1$.
\end{proof}

\end{document}